\newtheorem{theorem}{Theorem}
\newtheorem{proposition}{Proposition}
\newtheorem{definition}{Definition}
\newcommand{\commentout}[1]{}
\newcommand{\prs}{\par}
\newenvironment{proof}{\noindent{\bf Proof}:}{\mbox{$\Box$}
\prs\vspace{3mm}\prs}
\newcounter{myenumeratecounter}
\newtheorem{examplehidden}{Example}
\newenvironment{example}{
\begin{examplehidden}
\em
}
{\end{examplehidden}}
\newcommand{\cA}{\ensuremath{\mathcal A}}
\newcommand{\cH}{\ensuremath{\mathcal H}}
\newcommand{\cP}{\ensuremath{\mathcal P}}
\newcommand{\cS}{\ensuremath{\mathcal S}}
\newcommand{\cU}{\ensuremath{\mathcal U}}
\newcommand{\cY}{\ensuremath{\mathcal Y}}
\newcommand{\reals}{{\mathbb R}}
\newcommand{\Exp}{\ensuremath{\text{\rm E}}}
\newcommand{\capped}[1]{\ensuremath{\bar{#1}^{\textsc{cap}}}}
\newcommand{\mean}{\ensuremath{\theta}}
\newcommand{\deltamm}{\ensuremath{\delta^{\textsc{mm}}}}
\newcommand{\deltabayes}{\ensuremath{\delta^{\textsc{Bayes}}}}
\newcommand{\meanplus}{\ensuremath{\mean^{+}}}
\newcommand{\meanmin}{\ensuremath{\mean^{-}}}
\newcommand{\curry}{\ensuremath{y}}
\newcommand{\currY}{\ensuremath{Y}}
\newcommand{\currz}{\ensuremath{}}
\newcommand{\aspace}{\ensuremath{\cA}}
\newcommand{\action}[1]{\ensuremath{a}}
\newcommand{\dr}[1]{\ensuremath{\delta}}
\newcommand{\drbayes}[1]{\ensuremath{\deltabayes}}
\newcommand{\drmm}[1]{\ensuremath{\deltamm}}
\newcommand{\bbw}{\ensuremath{\text{\sc bbw}}}
\newcommand{\sd}{\ensuremath{\text{\sc sd}}}
\newcommand{\lr}{\ensuremath{\text{\sc lr}}}
\newcommand{\Sl}{\ensuremath{S^{\textsc{l}}}}
\newcommand{\Sr}{\ensuremath{S^{\textsc{r}}}}
\newcommand{\Slr}{\ensuremath{S^{\textsc{lr}}}}
\newcommand{\longp}[1]{}
\newcommand{\shortp}[1]{}
\renewcommand\Exp{\ensuremath{\mathbf E}}
\newcommand{\Cb}{\ensuremath{\mathcal{B}}}
\begin{document}

\title{The E-Posterior}
\author{Peter D. Gr\"unwald\footnotemark[1]}
\footnotetext[1]{CWI, Amsterdam, and  Mathematical Institute, Leiden University. CWI is the National Resarch Institute for Mathematics and Computer Science in the Netherlands.}
\maketitle
\commentout{

\address{$^{1}$CWI, Amsterdam, and Leiden University. ORCID ID 0000-0001-9832-9936}

\subject{Mathematics $\rightarrow$ Statistics}

\keywords{Bayes-frequentist debate, conditional inference, e-values, decision making, uncertainty quantification, Savage-Dickey ratio \\
\ \\ CWI is the National Research Institute for Mathematics and Computer Science in the Netherlands. A part of this paper overlaps with a small part (Section 4.2 and paragraphs in introductory sections announcing it) of the first, unsubmitted arXiv version of the paper {\em Beyond Neyman-Pearson\/} \citep{Grunwald22}. This part will be removed from the second and all future arXiv versions and will not be included in the  journal submission of \citep{Grunwald22}.
}

\corres{Peter D. Gr\"unwald\\
\email{pdg@cwi.nl}}
}
\begin{abstract}
We develop a representation of a decision maker's uncertainty based on e-variables. Like the Bayesian posterior, this {\em e-posterior\/} allows for making predictions against arbitrary loss functions that may not be specified ex ante.  Unlike the Bayesian posterior, it provides risk bounds that have frequentist validity 
irrespective of  prior adequacy: if the e-collection (which plays a role analogous to the Bayesian prior)  is chosen badly, the bounds get loose rather than wrong, making {\em e-posterior minimax\/} decision rules safer than Bayesian ones. The resulting  {\em quasi-conditional paradigm\/} is illustrated by re-interpreting a previous influential  partial Bayes-frequentist unification, {\em Kiefer-Berger-Brown-Wolpert conditional frequentist tests}, in terms of e-posteriors.
\end{abstract}



\noindent 
{This version is essentially  identical to the one published in the \\ {\em Philosophical Transactions of the Royal Society A}, Volume 381, Issue 2247, 2023. The only difference is the addition of a few references in Section 1, underneath (4), and in Section 5.}
\section{Introduction and Overview}
It is one of the main attractions of Bayesian inference: one may assess the posterior-expected performance of arbitrary decision rules in arbitrary decision tasks, and, as a consequence, perform posterior-optimal decisions for any such task. An important special case arises if we are given a statistical model $\cP = \{P_{\theta} : \theta \in \Theta \}$ equipped with a prior distribution $W$ on $\Theta$,  a space of actions $\aspace$ and a  loss function $L: \Theta \times \aspace \rightarrow \reals$, and we observe data $Y=y$, with $Y\sim P_{\theta}$ for some $\theta \in \Theta$, taking values in some set $\cY$. We call such a combination of $\cP$ and $L$ a {\em decision problem}.  
A decision rule $\dr{b}$ is then any function mapping data $y \in \cY$ to action $a \in \aspace$; we write $\dr{b}(y) = a$. We then assess the expected performance of $\dr{b}$ given data $Y=y$ as
\begin{align}\label{eq:bayesassessment}
{\bf E}_{\theta \sim W \mid y} [L_{\currz}(\theta,\dr{b}(y))],   
\end{align}
with $W \mid y$ denoting the Bayes posterior distribution based on prior $W$ and data $Y=y$. 
In their classic {\em Bayesian theory\/} (\citeyear{BernardoS94}), Jos\'e Bernardo and Adrian Smith adopt, as they make clear already on page 2, a  {\em wholehearted subjectivist position\/} on the interpretation of probability. Such a subjective Bayesian stance implies that if {\em the prior distribution truly describes one's initial degrees of belief}, then  the assessment (\ref{eq:bayesassessment}) is not just correct for {\em any\/} decision 
rule $\dr{b}$, it also implies optimality of the {\em Bayes decision rule}, defined to be any rule $\deltabayes$ such that, for all $y \in \cY$, 
\begin{align}\label{eq:bayesdecision}
    \bar{R}^{\textsc{bayes}}(W \mid y)  := {\bf E}_{\theta \sim W \mid y} [L_{\currz}(\theta,\drbayes{b}(y))] = 
    \min_{a \in \aspace} {\bf E}_{\theta \sim W \mid y} [L_{\currz}(\theta,a)].
\end{align}
Unfortunately, in practice, for lack of time, computational power or imagination, one often --- in fact, nearly always --- uses pragmatic priors in combination with potentially `misspecified' (wrong) models. Then the priors do not truly represent one's beliefs, and as a consequence  (\ref{eq:bayesassessment}) may give a highly misleading impression of $\dr{b}$'s quality in the real world, and using $\deltabayes$ may lead to disastrous results. Much of my own research in the past has concerned misspecified models \citep{Grunwald99a,GrunwaldO17}. In the present paper, I assume the model is correct and concentrate on the case in which "good" priors (i.e.\ sufficiently sophisticated to justify using (\ref{eq:bayesdecision}) in practice) are hard to come by. 
The pragmatic priors that are then used often lead to  {\em overconfident posteriors}, examples of which abound in the literature: decisions based on $W \mid y$ are then substantially worse in practice than predicted by $W \mid y$ itself via (\ref{eq:bayesassessment}) \citep{oelrich2020overconfident}, especially in nonparametrics \citep{szabo2015frequentist}; naturally such assessments can lead to (very) bad decisions \citep{balch2019satellite}. In this paper, we provide the {\em e-posterior\/} as an alternative for such cases: while remaining close to the Bayesian ideal, it  has a 
frequentist justification that avoids overconfidence and other misleading assessments.
As an additional advantage, in contrast to Bayesian methods, e-posteriors are readily used in highly nonparametric situations in which a likelihood is not easy to define --- we defer discussion of this bonus until the discussion Section~\ref{sec:discussion}.  
\paragraph{The E-Posterior}
The e-posterior is an analogue to the Bayes posterior for which frequentist analogues ((\ref{eq:riskassessment}) and (\ref{eq:minimax}) below) of Bayesian assessments ((\ref{eq:bayesassessment}) and (\ref{eq:bayesdecision}) above) hold simultaneously under all $\theta \in \Theta$. It is based on e-values, a far-reaching generalization of likelihood ratios. Crucially, just as with the Bayesian posterior, the same e-posterior can be combined with arbitrary decision problems and loss functions. That is, if a given prior $W$ truly represents beliefs, the Bayesian (\ref{eq:bayesassessment}) and (\ref{eq:bayesdecision}) are appropriate simultaneously for arbitrary sets of loss functions $\{L_{b}: b \in \Cb\}$, substantiating the common Bayesian claim that the Bayesian posterior summarizes all one's uncertainty about the parameter given the data --- since all conceivable decisions can be based on it. The same holds for the e-posterior analogues below: they are simultaneously valid for all loss functions, so that, analogously to the Bayesian case, the e-posterior can also be thought of as {\em a single function summarizing one's uncertainty}. As discussed in Section~\ref{sec:riskassessment}, this is what sets out the e-posterior from  previous attempts to provide frequentist guarantees for Bayesian posteriors or frequentist analogues thereof such as confidence distributions. There we also explain why  we regard validity under post-hoc choice of loss function as crucial in statistical testing and estimation in the applied sciences.

Both the Bayesian and the e-representation of uncertainty are relative: whereas the Bayesian posterior is defined relative to a prior defined upon a statistical model $\cP$,  the e-posterior is defined relative to a 
collection $\cS$ of e-variables, which is itself defined relative to $\cP$.
This `e-collection' $\cS$ in turn may itself be implicitly based on a prior (Example~\ref{ex:normal}) or a set of priors
(Section~\ref{sec:generalizedsd}). Because it gives rise to stochastic upper bounds on risks rather than precise risks, we  are able to obtain the result that the e-posterior is {\em valid\/} in a frequentist sense irrespective of the collection $\cS$ taken; but the bounds may get better or worse depending on 
$\cS$. This makes it considerably more robust than the Bayesian risk assessment (\ref{eq:bayesassessment}), which, at least under a subjective Bayes interpretation, one can only trust if one truly fully believes one's prior $W$ and one's model $\cP$. It puts the e-posterior approach in the {\em luckiness\/}-framework 
\citep{Grunwald07,ShaweTaylorW97,HerbrichW02a,GrunwaldM19} in which Bayesian {\em belief\/} (one has to believe one's prior to trust one's assessment) is replaced by {\em guarantees\/} and {\em hope\/} that the guarantees are 
good: if the prior is well-aligned with the data, one is {\em lucky\/} and gets tight bounds; if not, the bounds one gets are loose but still valid. It goes substantially further though than standard `luckiness' approaches (such as PAC-Bayesian methods \citep{GrunwaldM19}), since these invariably incorporate an a-priori given loss function into the likelihood (e.g. via a Gibbs posterior) so that the same (e.g. Gibbs) posterior can only be used for a single loss function. Our e-posterior being combinable with arbitrary loss functions, we chose to call the resulting type of inference {\em quasi-conditional}, a name we explain further in Section~\ref{sec:riskassessment}. 
Before we start with the details, we should point out that, of course, not all can be rosy: the specific type of stochastic upper bound that provides our sense of quasi-conditional frequentist validity, as given in Proposition~\ref{prop:TheBound} and interpreted underneath in terms of a De Finetti-type of game between a decision-maker and a bookie, while holding universally, may in some instances provide a weaker inference than one would really like to see --- we consider it as the main open problem of this work to further investigate this novel type of validity. 
\paragraph{Formal Definitions}
Formally, we associate $\cP = \{P_{\theta} : \theta \in \Theta\}$ with some corresponding collection $\cS$ of e-variables
$\cS= \{S_{\theta} : \theta \in \Theta\}$. 
We focus on what we call the {\em simple setting}, in which all parameters in parameter vector $\theta \in \Theta \subseteq \reals^k$ are of interest. An e-variable $S_{\theta}$ relative to any $\theta \in \Theta$ is then simply  any  {\em nonnegative\/}
statistic (i.e.\ a random variable that can be written as a function of the data, $S_{\theta}= S_{\theta}(Y)$) that satisfies the inequality:
\begin{equation}\label{eq:basic}
{\bf E}_{Y \sim P_{\theta}}[S_{\theta}] \leq 1.
\end{equation}
We defer the case of nonparametrics and nuisance parameters, requiring a slightly extended definition, to Section~\ref{sec:discussion}.
Over the last three years, interest in  e-variables has exploded  \citep{GrunwaldHK19,wasserman2020universal,Shafer:2021,VovkW21,henzi2021valid,ramdas2022testing}, as tools to extend traditional Neyman-Pearson tests to situations with optional stopping and continuation while keeping Type-I error guarantees --- see  Section~ \ref{sec:discussion} for details. Here we use them in a novel way: 
we define the {\em e-posterior\/} based on $\cS$ and $Y$ to be the reciprocal of $S_{\theta}$:
\begin{align}
    \label{eq:eposterior}
    \bar{P}(\theta \mid y) := \frac{1}{S_{\theta}(y)},
\end{align}
where by convention we set $1/0$ to be $\infty$. 
We note that the idea to study the reciprocal of $S_{\theta}$ directly is in itself not new --- it has  been considered earlier under the name of `confidence distribution' by \cite[Appendix D5]{waudby2020estimating}, the first version of which came out in 2020 (with $\theta \in [0,1]$ the mean of a bounded random variable, and already including figures similar to our Figure~\ref{fig:eposterior}) and also by \cite{PawelLW22} to derive `evidential' rather than `credible' posterior intervals. The novelty in this paper is to take (\ref{eq:eposterior}) center- stage and to analyze its decision-theoretic properties. 
To this end, we continue with an entirely novel definition:  we set the {\em e-posterior risk assessment\/} of decision rule $\delta$ based on $\cS$ to be: 
\begin{align}\label{eq:riskassessment}
 \textbf{e-posterior risk bound:} \ 
 \bar{R}(\delta) := \bar{R}(Y,\dr{B}(Y)) \text{\ with\ } 
     \bar{R}(y,a) := \sup_{\theta \in \Theta} \bar{P}(\theta \mid y)  \cdot L_{\currz} (\theta,a), 
\end{align} 
where we use the convention that $\infty\cdot a := 0$ if $a=0$ and $\infty$ if $a > 0$. 
\commentout{The idea is that a decision maker (DM) is  presented a decision problem with loss function $L_{\currz}$, parameterized by some $b$; this is formalized as DM  observing $B=b$, where the distribution of $B$, determining the loss of interest, will typically be unknown to DM, and may depend on the observed $Y=y$ (for illustration see Section~\ref{sec:gaussexample} and various examples thereafter). DM then performs action $\dr{b}(y)$;} 
$\bar{R}(\delta)$ is the random variable that maps $Y$ 
to the corresponding risk bound
(\ref{eq:riskassessment}), which  is to be interpreted as a specific type of {\em stochastic upper bound\/} on the risk that holds under all $\theta \in \Theta$ simultaneously --- Proposition~\ref{prop:TheBound} below makes this precise. 
It holds for all loss functions $L: \Theta \times \cA \rightarrow \reals$ satisfying the condition that $\sup_{\theta \in \Theta} L(\theta,a) \geq 0$ for each $a$. We call this the {\em no-sure-gain condition}. It is satisfied if $L\geq 0$, but in general it also allows negative losses (as will be useful in Example~\ref{ex:science}). 
In contrast to the upper bound-nature of (\ref{eq:riskassessment}), the Bayesian (\ref{eq:bayesassessment})  does not just bound but precisely gives the conditional risk.  Next, in analogy to (\ref{eq:bayesdecision}), we suggest, once $\cS$ has been fixed, the $\cS$-based {\em e-posterior-minimax decision rule\/} as any rule $\deltamm$ satisfying, for all $y \in \cY$, any loss function $L_{\currz}$,
\begin{align}\label{eq:minimax}
\textbf{e-posterior minimax rule:} \ 
\sup_{\theta \in \Theta} \ \bar{P}(\theta \mid y) \cdot L_{\currz}(\theta,\drmm{b}(y)) =
\min_{a \in \aspace} \sup_{\theta \in \Theta} \ \bar{P}(\theta \mid y) \cdot L_{\currz}(\theta,a).
\end{align}

\paragraph{Contents of this Paper}
Section~\ref{sec:riskassessment} provides  Proposition~\ref{prop:TheBound}  which establishes the frequentist sense in which  $\bar{R}(\delta)$ as defined by (\ref{eq:riskassessment}) is valid. We discuss this {\em quasi-conditional\/} validity at length and provide extended examples. 
The paper continues in Section~\ref{sec:taxonomy} by listing various types of e-collections, and investigating the risk assessment and the corresponding e-posterior minimax rules that ensue from them when instantiated to simple yet important situations. In particular, Section~\ref{sec:generalizedsd} considers the Gaussian location family and a weighted squared error loss, with data-dependent weights. In this situation (extended to 1-dimensional exponential families in the Supplementary Material), Bayesian risk assessment based on  standard objective Bayes posteriors can fail spectacularly if the weights strongly vary with the data, whereas our type of risk assessment via (\ref{eq:riskassessment}) is always valid; moreover, if the weights are constant, 
the assessment for 
e-posterior minimax rules can  match minimax optimal frequentist and Bayesian rates up to a small constant factor, thereby alleviating concerns that the $\sup$ in (\ref{eq:riskassessment}) may make assessments overly pessimistic. In Section~\ref{sec:conditionalistfrequentist}, we consider a Bayesian-frequentist unification for hypothesis testing with a simple null that 
was introduced in an influential paper by Berger, Brown and Wolpert (\citeyear{BergerBW94}) (BBW from now on),
building on earlier work by 
\cite{kiefer1976admissibility,Kiefer77,brownie1977ideas}.
BBW's approach was later extended with various collaborators \citep{BergerBW94,Wolpert96b,BergerG01,DassB03},  culminating in Berger's (\citeyear{Berger03}) IMS Lecture paper {\em Could Fisher, Neyman and Jeffreys have agreed on testing?\/} 
We show how to re-interpret the BBW conditional error probabilities, as well as Kiefer's earlier {\em conditional confidence estimators\/}  (\citeyear{kiefer1976admissibility,Kiefer77}) and Vovk's (\citeyear{Vovk93}) extension of it, in terms of (quite special) e-posteriors.
Our re-interpretation suggests that in practice, we will usually want to use other e-variable collections than the BBW one --- which  is not at all  to criticize their pioneering and highly original work, that, together with Vovk's (\citeyear{Vovk93}), has served as the main inspiration for this paper. 
Then, in Section~\ref{sec:capped}, we describe a variation of the e-posterior risk assessment (\ref{eq:riskassessment}) in which the e-posterior is replaced by its {\em capped\/} version (see Figure~\ref{fig:eposterior}) and we investigate when this arguably more intuitive form of the e-posterior gives rise to the same assessment. This turns out to be the case under {\em Condition Zero}, a fundamental condition on the decision rules employed, which may or may not be enforceable in any given decision problem.
Finally, in Section~\ref{sec:discussion}, we give more background on the traditional use and interpretation of e-variables and generalize to testing with composite nulls, estimating with nuisance parameters and nonparametrics, and we provide some concluding discussion.
But first, we introduce several running examples of  e-posteriors, highlighting via (\ref{eq:maxmean}) the connection with the Bayes posterior.
\subsection{Examples of E-Collections}
\begin{example}{\bf [A\  Simple E-Posterior: Savage-Dickey]}\label{ex:simplesd}
A simple way to design e-variables is to start with a prior distribution $W$ on $\Theta$ and setting $p_W(y) := \int p_{\theta}(y) dW(\theta)$ to be the Bayes marginal based on $W$. Here, as in the rest of the paper, we assume that for all $\theta \in \Theta$, $P_{\theta}$ has density $p_{\theta}$ relative to some common underlying measure $\mu$. One then defines, for each $\theta \in \Theta$,
\begin{align}
    \label{eq:sd}
    S^{\sd}_{\theta} =  \frac{p_W(y)}{p_{\theta}(y)} \ \ ; \ \ \bar{P}^{\sd}(\theta \mid y) = 
    \frac{p_{\theta}(y)}{p_W(y)} = \frac{w(\theta \mid y)}{w(\theta)}
\end{align}
where the latter equality holds if $W$ has density $w$ and we denote prior/posterior densities by $w(\theta)$ and $w(\theta \mid y)$, respectively.  $S^{\sd}_{\theta}$ is an e-variable, since it satisfies ${\bf E}_{Y \sim P_{\theta}}[S^{\sd}_{\theta}] = \int_{\cY} p_W(y) d y = 1$. 
\cite{PawelLW22} study this e-variable, focusing on an `evidential' interpretation; it also figures prominently in \citep{GrunwaldHK19} and, under the name `prior-to-posterior e-variable', in \citep{neiswanger2021uncertainty}. We chose our alternative name 
because, in the Bayesian literature, the quantity $w(\theta \mid y)/{w(\theta)}$ is generally known as the {\em Savage-Dickey density ratio\/} after the prominent Bayesians L. Savage and J. Dickey.
The Savage-Dickey e-variables are by no means the only reasonable ones to base an e-posterior on, but they are educationally insightful because they give a first interpretation of risk assessment (\ref{eq:riskassessment}). Namely, using Bayes' theorem we can trivially but unusually rewrite  the {\em standard\/} Bayesian uncertainty assessment (\ref{eq:bayesassessment}) as follows:
\begin{align}
     \Exp_{\theta \sim W \mid y} [L_{\currz}(\theta,\dr{b}(y) )] =
\int \frac{p_\theta(y)}{p_W(y)} \cdot  L_{\currz}(\theta,\dr{b}(y) ) d W(\theta)
=   &  \ \   \Exp_{\theta \sim W } 
[\bar{P}^{\textsc{sd}} (\theta \mid y) \cdot L_{\currz}(\theta,\dr{b}(y) )],\label{eq:meanmax} \\
 \text{whereas with $Y=y$, 
 we have \ } 
\bar{R}(\delta) = & \ \ \ \max_{\theta} \ \ \bar{P}^{\textsc{sd}} (\theta \mid y) \cdot L_{\currz}(\theta,\dr{b}(y) ).\label{eq:maxmean}
\end{align}
Replacing prior expectation (\ref{eq:meanmax}) by maximum (\ref{eq:maxmean}), the Savage-Dickey e-posterior is always more pessimistic than the Bayesian posterior based on the same prior, and might thus not look so outlandish to Bayesians any more.
\end{example}
\begin{example}{\bf [Savage-Dickey for the Normal Location Family]}\label{ex:normal}
Let $\{P_{\theta}: \theta \in \reals\}$ represent the normal location family, where $P_{\theta}$ with density $p_{\theta}$ has mean $\mean$ and variance $1$. We take as prior a normal distribution with mean $\mean_0$ and variance $\rho^2 > 0$, and define the precision $\lambda := \rho^{-2}$. Suppose we observe  $Y = x^n = (x_1, \ldots, x_n)$.  By standard calculations, the standard Bayesian posterior is  given by a normal distribution with mean $\breve{\mean} = (n/(n+ \lambda))\hat\mean + (\lambda/(n+ \lambda)) \mean_0$, with $\hat\mean = \hat\mean(y)$ the MLE  $(\sum_{i=1}^n x_i)/n$, and posterior variance $1/(n+\lambda)$, i.e.\ with density
\begin{equation}
    \label{eq:posterior}
w(\mean \mid x^n) = \sqrt{\frac{n+\lambda}{{2 \pi}}} \cdot e^{-\frac{(n+\lambda) (\mean - \breve\mean)^2}{2}}
\end{equation}
so that, by (\ref{eq:sd}),
\begin{equation}\label{eq:itsnormal}
\bar{P}^{\sd}(\mean \mid y) = 
\sqrt{\frac{n+\lambda}{\lambda}} \cdot e^{- \frac{n+\lambda}{2} (\mean - \breve\mean)^2 + \frac{\lambda}{2} \cdot (\mean - \mean_0)^2}
 = 
\sqrt{\frac{n+\lambda}{\lambda}} \cdot e^{- \frac{n}{2} (\mean - \hat\mean)^2 + \frac{1}{2} \cdot \frac{n \lambda}{n+\lambda} \cdot (\hat\mean - \mean_0)^2}
\end{equation}
where the latter equality follows by simple calculus when $\mean_0 = 0$ and reducing the general case to  this case by considering translated data $x_1- \mean_0, \ldots, x_n - \mean_0$; see
Figure~\ref{fig:eposterior}. 
\end{example}
\begin{example}{\bf [Two-Point Prior-Based E-Posterior for Normal Location]}
\label{ex:discrete}
Fix some $C> 0$ and set $n^*$ equal to the sample size $n$ (in Example~\ref{ex:normalcontinued} we generalize to and explain cases with $n^* \neq n$) and, for each $\theta$, define $\theta^- < \theta $ and $\theta^+ > \theta$ such that
\begin{equation}\label{eq:cinzia}
  \frac{1}{2} 
(\mean - \meanplus)^2 = \frac{1}{2}
 (\mean - \meanmin)^2 = \frac{C}{n^*}.
\end{equation}
Then $S_{\mean}(y) =  \frac{(1/2) p_{\meanmin}(y)+ (1/2) p_{\meanplus}(y)}{p_{\mean}(y)}$ is  an e-variable (just plug it into the definition to check). It is not of Savage-Dickey form, since $\meanmin$ and $\meanplus$ depend on $\theta$. In fact it coincides with the two-sided version of Johnson's (\citeyear{johnson2013uniformly}) {\em uniformly most powerful Bayes factor\/}  at level $\alpha$ if we were to set $C:= - \log \alpha$, 
which might, according to Johnson, motivate its use for testing $\cH_0 = \{P_{\theta} \}$ against $\cH_1 = \{P_{\theta'}: \theta' \in \Theta, \theta'\neq \theta \}$. Here we will use it for estimating and not just testing, i.e.\ for general $\theta \in \Theta$; then its Bayesian interpretation is not so clear, but it will tend to give better risk assessments (\ref{eq:riskassessment}) than the Savage-Dickey e-variable above, as we explain in Section~\ref{sec:taxonomy}. 
Both Example~\ref{ex:normal} and~\ref{ex:discrete} are extended to general regular \citep{BarndorffNielsen78} 1-dimensional exponential families in the Supplementary Material.
\end{example}
\begin{example}{\bf [Simple Hypothesis Testing with LR E-Variables]}\label{ex:simpletesting}
Consider testing a simple $\cH_0 = \{P_0 \}$ vs.\ a simple $\cH_1 = \{P_1\}$ (we remark on the composite case in Section~\ref{sec:discussion}). 
We can embed this in our setting by taking $\cP= \cH_0 \cup \cH_1$ and  as our loss function a {\em Wald-Neyman-Pearson}-type loss given by $\aspace= \cA= \{0,1\}$, $L_{\currz}: \{0,1\}^2 \rightarrow \reals^+_0$ nonnegative, and $L_{\currz}(0,1) > 0$, $L_{\currz}(1,0) > 0$. We further say that the loss function satisfies {\em Condition Zero}, henceforth abbreviated to ${\bf C0}$, if 
$L_{\currz}(0,0) = L_{\currz}(1,1) = 0$. 
The interpretation is that action $0$ stands for `accept the null' leading to loss $0$ if the true state of nature is $P_0$; and $1$ stands for `reject the null', leading to loss $0$ if the alternative $P_1$ is true. The use of such loss functions is implicit in the Neyman-Pearson testing paradigm, as noted in 1939 by  \cite{Wald39} who stated that one may `of course' set $L_{\currz}(0,0) = L_{\currz}(1,1) = 0$, i.e.\ impose {\bf C0}, and because it simplifies treatment we will  do so in most testing examples in this paper --- but in Example~\ref{ex:science} we will deviate from it, and see that this in a sense makes for the most interesting applications of our risk assessments. 
We associate both $P_0$ and $P_1$ with an e-variable. A natural choice is to take the likelihood ratio, $S_1 = p_0(Y)/p_1(Y)$ and $S_0 = p_1(Y)/p_0(Y)$, seen to be e-variables by noting ${\bf E}_{Y\sim P_{\theta}}(p_{\theta'}(Y)/p_\theta(Y)) = \int p_{\theta'}(y) dy = 1$. In fact, they are the `prototypical'  e-variables for simple-vs.-simple testing \citep{GrunwaldHK19}. The risk assessment (\ref{eq:riskassessment}) now becomes
\begin{equation}
\label{eq:lrassessment}
\bar{R}(\delta) =
\max \left\{ \frac{p_1(Y)}{p_0(Y)} L_{\currz}(1,\dr{B}(Y)),
 \frac{p_0(Y)}{p_1(Y)} L_{\currz}(0,\dr{B}(Y)) \right\}  
 \overset{\text{under  {\bf C0}}}{=}   \frac{p_1(Y)}{p_0(Y)} L_{\currz}(1,\dr{B}(Y))+
 \frac{p_0(Y)}{p_1(Y)} L_{\currz}(0,\dr{B}(Y)).
\end{equation}
\end{example}
\begin{example}
{\bf [Simple Testing with BBW E-Variables]}
\label{ex:bbwfirst}
Several other e-variables may be considered for simple hypothesis testing as well, including the Savage-Dickey e-variables; we treat these in Section~\ref{sec:taxonomy}. There we also show that BBW's  conditional error probabilities \citep{BergerBW94,Wolpert96b,Berger03} can be interpreted as e-posteriors. 
Formally, we obtain a parameterized set of e-posteriors \mbox{$\{ \bar{P}^{\bbw}_w(\theta \mid y) \}$} for $0 < w < 1$. We consider the special case $w=1/2$ here, deferring the general case and definitions to Section~\ref{sec:conditionalistfrequentist}. As we will see there, this e-posterior is quite special:  
for the e-posterior minimax  decision rule $\deltamm$ we get,  with $\bar{R}$ defined relative to the BBW e-posterior, under ${\bf C0}$,
\begin{eqnarray}
\label{eq:kbbwassessment}
\bar{R}(\deltamm) & {=} &  
\frac{p_1(Y)}{p_0(Y)+ p_1(Y)} L_{\currz}(1,\deltamm(Y))+
 \frac{p_0(Y)}{p_0(Y)+p_1(Y)} L_{\currz}(0,\deltamm(Y)). 
\end{eqnarray}
Thus, it can again be rewritten as a sum rather than a supremum and we see that it is formally identical to the Bayes assessment  (\ref{eq:bayesassessment}) when $P_0$ and $P_1$ are equipped with the uniform prior. We thus have a {\em frequentist\/} risk assessment, not requiring a prior assumption on $\cP$,  that is as strong as a Bayesian one, which is optimal under a specific prior assumption. 
%
%
This seems almost too good to be true, especially since it is slightly, but uniformly, better than the alternative (\ref{eq:lrassessment}) --- and as we will see in Section~\ref{sec:conditionalistfrequentist}, there is indeed no free lunch: (\ref{eq:kbbwassessment}) only holds for $\deltamm$, which for the BBW e-variable can sometimes be a rather bad decision rule, whereas (\ref{eq:lrassessment}) holds for arbitrary rules; there are some other prices to pay as well when using BBW's e-variable. In fact, a major motivation for the author's involvement in e-variables was my realization, some fifteen years ago, that when deriving frequentist Type-I error bounds for decisions based on Bayes factors/likelihood ratios in simple testing, one ends up with the expression $p_1/p_0$ as in (\ref{eq:lrassessment}) if one follows the reasoning of classic Bayesian texts such as \citep{EdwardsLS63,Good91} or Royall's {\em universal bound\/}
\citep{royall1997statistical}; whereas using BBW's ideas, one obtains $p_1/(p_1+p_0)$ as in  (\ref{eq:kbbwassessment}): the small but essential discrepancy between the two assessments  tripped me up for about 15 years until I finally developed the necessary tool to understand it --- the much more general e-posterior of which both approaches are simply a special case, each with its own (dis)advantages,  as discussed in Section~\ref{sec:conditionalistfrequentist}.
\end{example}
\begin{figure}
    \centering
    \includegraphics[width=0.3\textwidth]{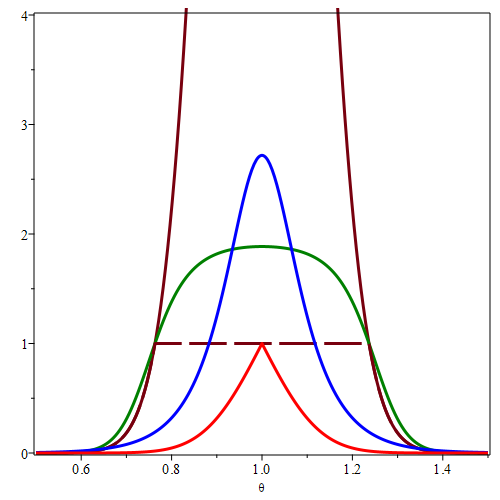}  \ 
    \includegraphics[width=0.3\textwidth]{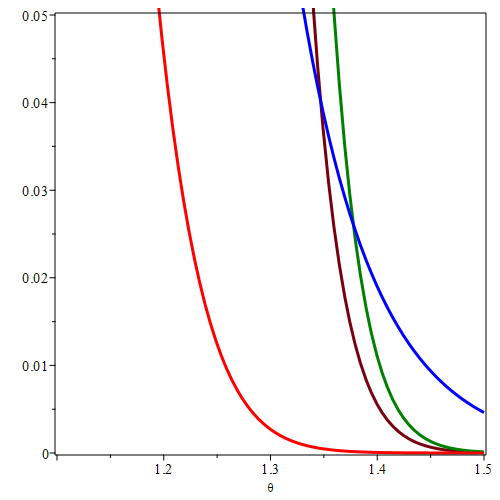} \
    \includegraphics[width=0.3\textwidth]{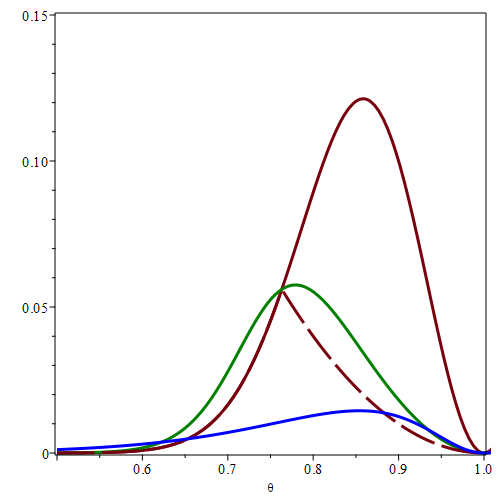} 
    \caption{
    {(\em left and middle)\/} The brown solid curve is the pure Savage-Dickey e-posterior $\bar{P}(\theta \mid y)$  for the normal location family based on  sample $y= (x_1, \ldots, x_{100})$ with $\hat\mean(y) = 1$ and prior $W$ with $\lambda =1$ and $\theta_0=0$ as in (\ref{eq:itsnormal})  (it reaches a maximum of about $17$). The green curve represents the $1/2$-dampened version (see (\ref{eq:dampen})) of this e-posterior, by design  bounded by $2$. The dashed brown curve is the  capped version $\capped{P}(\theta \mid y)$ (Definition~\ref{def:capped}) of $\bar{P}(\theta \mid y)$, equal to it whenever the latter is $\leq 1$. The blue curve represents the generalized Savage-Dickey two-point prior-based e-posterior optimized for $n^*=n=100$ as in Example~\ref{ex:discrete} and~\ref{ex:normalcontinued}.  The red curve is twice the tail area of the objective Bayes posterior $w^{\circ}(\theta \mid y)$ of Example~\ref{ex:gaussexample}, i.e.\ given by $f(\theta) =2 W^{\circ}(\bar\theta \geq \theta \mid y) = 2 \int_{\theta}^{\infty} w^{\circ}(\theta \mid y) d \theta$.
    Viz. the remark underneath Definition~\ref{def:capped}, the set $\{\theta: \bar{P}(\theta \mid y) \geq \alpha \}$ is a conservative 2-sided $(1-\alpha)$-confidence interval, whereas, viz. Example~\ref{ex:gaussexample},  the set   $\{\theta: f(\theta) \geq \alpha \}$ is an exact nonconservative 2-sided $(1-\alpha)$-confidence interval, so that plotting $f(\theta)$ provides a better comparison to $\bar{P}(\theta \mid y)$ than directly plotting $w^{\circ}(\theta \mid y)$. The middle display zooms in on the $0.05$ right tail. \\ {(\em right)\/} The value of $(\theta -\hat\theta)^2 \bar{P}(\theta \mid y)$ is depicted for the same four e-posteriors, using the same colors. The two-point prior-based e-posterior is optimized to give good assessments relative to squared error, its maximum is considerably smaller than for the others; for the pure Savage-Dickey prior, the maximum is excessively high. 
    \label{fig:eposterior}} 
\end{figure}
\section{Quasi-Conditional Risk Assessment}
\label{sec:riskassessment}
We  want risk bounds that are valid, in some precise sense, simultaneously for all  loss functions of the form $L: \Theta \times \cA \rightarrow \reals$ that satisfy the no-sure-gain condition (see underneath (\ref{eq:riskassessment})). Proposition~\ref{prop:TheBound} below shows that, for any choice of e-variable collection $\cS =  \{S_{\theta}: \theta \in \Theta \}$ and any given decision rule $\delta$, we have that  $\bar{R}(\delta)$ as in (\ref{eq:riskassessment}) satisfies this purpose.
\begin{proposition}\label{prop:TheBound}{\bf [Validity of E-Risk Assessments]}
Consider a decision problem as defined in the introduction relative to any model $\{P_{\theta}: \theta \in \Theta\}$ for random variable $Y \in \cY$ and loss function $L$ as above that satisfies the no-sure-gain condition.  
Let $\bar{R}(\delta)$ be as in (\ref{eq:riskassessment}) relative to some e-variable collection $\cS= \{S_{\theta}: \theta \in \Theta\}$.
For all $\theta \in \Theta$ we have: 
\begin{equation}\label{eq:TheBoundA}
{\bf E}_{Y \sim P_{\theta}} 
\left[
\frac{L_{\currz}(\theta,\dr{B}(Y))}{\bar{R}{(\delta)}}
\right]  \leq 1,
\end{equation}
where we adopt the convention $0/0 :=0$. In particular, this implies, for every function 
$b: \cY \times \cU \times \Theta \times \cA \rightarrow \reals^+$, and for any random variable $U$ taking values in some set $\cU$ and any distribution $P'_{\theta}$ for $(Y,U)$ whose marginal on $Y$ coincides with $P_{\theta}$, that  
\begin{equation}\label{eq:TheBoundB}
{\bf E}_{(Y,U)\sim P'_{\theta}} 
\left[
{b(Y,U,\theta,\delta(Y)) \cdot L_{\currz}(\theta,\dr{B}(Y))}
\right]  \leq
\sup_{(y,u,\theta) \in \cY \times \cU \times \Theta}  {b(y,u,\theta,\delta(y))} \cdot {\bar{R}(y,\dr{b}(y))}.
\end{equation}
Moreover,
for any e-collection $\cS$,
the  risk  bound  (\ref{eq:riskassessment}) with any minimax decision rule $\deltamm$ as given by (\ref{eq:minimax}) {\em dominates\/}  (\ref{eq:riskassessment}) when applied with any other decision rule $\delta$:
for all $y \in \cY$: $\bar{R}(y,\drmm{b}(y)) \leq \bar{R}(y,\dr{b}(y))$.
\end{proposition}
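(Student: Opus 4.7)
The plan is to derive all three claims from a single pointwise inequality relating $L$, $S_\theta$ and $\bar R$. First I would establish, for every $\theta \in \Theta$ and $y \in \cY$, the key inequality
\begin{equation}\label{eq:keypt}
L_{\currz}(\theta,\delta(y)) \ \leq\  S_\theta(y)\cdot \bar R(y,\delta(y)). \tag{$\star$}
\end{equation}
This is immediate from the definition
$\bar R(y,\delta(y)) = \sup_{\theta'\in\Theta} \bar P(\theta'\mid y)\,L_{\currz}(\theta',\delta(y))
\ \geq\ L_{\currz}(\theta,\delta(y))/S_\theta(y)$,
using $\bar P(\theta\mid y)=1/S_\theta(y)$. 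The no-sure-gain condition ensures $\bar R(y,\delta(y))\geq 0$, which lets us multiply by $S_\theta(y)\geq 0$ without flipping signs; the corner cases ($S_\theta(y)=0$ or $\bar R(y,\delta(y))\in\{0,\infty\}$) are handled by a short case distinction using the stated conventions $0/0 :=0$ and $1/0 :=\infty$, and in each case (\ref{eq:keypt}) holds as stated.

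From (\ref{eq:keypt}), claim (\ref{eq:TheBoundA}) is obtained by dividing by $\bar R(Y,\delta(Y))$ (with the $0/0:=0$ convention covering the degenerate locus) and applying the defining property $\Exp_{Y\sim P_\theta}[S_\theta(Y)]\leq 1$ of an e-variable:
\begin{equation*}
\Exp_{Y\sim P_\theta}\!\left[\frac{L_{\currz}(\theta,\delta(Y))}{\bar R(\delta)}\right]
\ \leq\ \Exp_{Y\sim P_\theta}[S_\theta(Y)] \ \leq\ 1.
\end{equation*}

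Claim (\ref{eq:TheBoundB}) comes from inserting (\ref{eq:keypt}) inside the expectation on the left and pulling the factor involving $b$ and $\bar R$ out as a supremum, exploiting nonnegativity of $b$, $S_\theta$, and $\bar R$:
\begin{align*}
\Exp_{(Y,U)\sim P'_\theta}\!\bigl[b(Y,U,\theta,\delta(Y))\cdot L_{\currz}(\theta,\delta(Y))\bigr]
&\leq \Exp_{(Y,U)\sim P'_\theta}\!\bigl[b(Y,U,\theta,\delta(Y))\cdot \bar R(Y,\delta(Y))\cdot S_\theta(Y)\bigr]\\
&\leq \Bigl(\sup_{y,u} b(y,u,\theta,\delta(y))\cdot \bar R(y,\delta(y))\Bigr)\cdot \Exp_{(Y,U)\sim P'_\theta}[S_\theta(Y)].
\end{align*}
The marginal assumption $P'_\theta\!\mid_Y = P_\theta$ reduces the remaining expectation to $\Exp_{Y\sim P_\theta}[S_\theta(Y)]\leq 1$, and enlarging the supremum to also range over $\theta$ yields the stated bound. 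Finally, the dominance statement is almost tautological: by the definition of $\deltamm$ in (\ref{eq:minimax}),
$\bar R(y,\deltamm(y)) = \min_{a\in\cA}\sup_{\theta}\bar P(\theta\mid y)\,L_{\currz}(\theta,a)
\leq \sup_{\theta}\bar P(\theta\mid y)\,L_{\currz}(\theta,\delta(y))
= \bar R(y,\delta(y))$ for every $y$.

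The only genuinely delicate part is (\ref{eq:keypt}): one must be careful because $L$ is allowed to be negative and $\bar P(\theta\mid y)$ may equal $\infty$, so multiplying and dividing through $\bar R$ or $S_\theta$ is not automatic. The main obstacle is therefore writing out a crisp case analysis (distinguishing $L\leq 0$, where (\ref{eq:keypt}) is trivial since the right-hand side is nonnegative, from $L>0$, where $\bar R>0$ is forced and division is safe) that makes the conventions $1/0=\infty$ and $0/0=0$ act consistently throughout. Everything else is routine linearity and monotonicity of expectation together with the single bound $\Exp_{P_\theta}[S_\theta]\leq 1$.
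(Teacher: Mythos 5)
Your proposal is correct and follows essentially the same route as the paper: the same pointwise inequality $\bar R(y,\delta(y)) \geq \bar P(\theta\mid y)\,L_{\currz}(\theta,\delta(y))$ (which you merely rewrite in the multiplicative form $(\star)$), the same reduction to ${\bf E}_{P_\theta}[S_\theta]\leq 1$, and the same observation that (\ref{eq:TheBoundB}) and the dominance of $\deltamm$ follow immediately from the definitions. Your explicit case split on the sign of $L$ and the degenerate values of $S_\theta$ and $\bar R$ is, if anything, slightly more careful than the paper's own treatment, which only singles out the case $\bar R(\delta)=0$.
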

\begin{proof}
Fix any $\theta \in \Theta$. By definition of $\bar{R}({y,a})$, we have for all $y \in \cY$ that 
$\bar{R}({y,\dr{b}(y)}) \geq \bar{P}(\theta \mid y) L_{\currz}(\theta,\dr{b}(y))$ so that 
\begin{align}
{\bf E}_{Y \sim P_{\theta}}  \left[
\frac{L_{\currz}(\theta,\dr{B}(Y))}{\bar{R}(\delta)}
\right]  \leq  
 {\bf E}_{Y \sim P_{\theta}}  \left[ 
\frac{L_{\currz}(\theta,\dr{b}(Y))}{
 S^{-1}_{\theta}(Y) L_{\currz}(\theta,\dr{b}(Y))
}
\right]  = {\bf E}_{Y \sim P_{\theta}} \left[ S_{\theta}\right] 
%
\leq 1,
\end{align}
noting that the derivation is valid as long as the denominator $\bar{R}(\delta)$ inside the first expectation cannot evaluate to $0$. In case it can, we split up the expectand into two terms, ${\bf 1}_{\bar{R}(\delta) = 0} \cdot {L_{\currz}(\theta,\dr{B}(Y))}/{\bar{R}(\delta)}+ {\bf 1}_{\bar{R}(\delta) > 0} \cdot {L_{\currz}(\theta,\dr{B}(Y))}/{\bar{R}(\delta)}$, and use that the first fraction is $0/0$ which by our convention evaluates to $0$; we proceed with the second term as above.
This proves (\ref{eq:TheBoundA}). (\ref{eq:TheBoundB}) and domination of $\drmm{b}$ is then immediate.
\end{proof}
The bounds (\ref{eq:TheBoundA}) and (\ref{eq:TheBoundB}) (we mostly use the latter) are unusual and for them to be practically interesting, they should satisfy several desiderata. In particular: 
\begin{enumerate}
    \item 
They should have a clear interpretation. We show that this is the case directly below.
\item They should exceed standard risk bounds obtainable for standard estimators (actions) in standard decision problems by only a small factor, at least if we choose a reasonable e-collection $\cS$. This is indeed the case in our running examples (see Example~\ref{ex:normalcontinued},  for the Gaussian location family with standard squared error loss, Example~\ref{ex:normalzero} for an extension to exponential families, and  Example~\ref{ex:testinggsd}, underneath (\ref{eq:twohypothesessdb})  for simple-vs.-simple testing).  
\item There should exist settings in which, just like Bayesian risk assessments, they 
allow one to make more risky decisions conditional on more extreme data. This happens in Example~\ref{ex:science} below. 
\item There should exist situations in which they give useful bounds whereas a standard  Bayesian assessment, based on a standard convenience prior, is really highly misleading. We give such a case in Example~\ref{ex:gaussexample}. 
\end{enumerate}
\paragraph{Bookie Interpretation}
The following interpretation of (\ref{eq:TheBoundB}) has a {\em De Finettian\/} flavour to it. Suppose a bookie offers decision-maker (DM) to play the following game, really a meta-decision problem: DM first specifies a  decision-rule $\delta$ for the original decision-problem. DM then gets paid some amount $\ell > 0$; then $\theta$ and $Y$  are revealed to both DM and bookie. Then bookie presents a collection of numbers $\{B_{\theta,a}: \theta \in \Theta, a \in \cA \}$, all assumed  $> 0$. Bookie may choose $B_{\theta,a}$ as functions of the data $Y$ and external data, here encoded as $U$; hence we have $B_{\theta,a}=b(Y,U,\theta,a)$ for some function $b$.  DM then has to pay back $B_{\theta,\delta(Y)} \cdot  L(\theta,\delta(Y))$. {\em If the rules are such that (i.e. bookie a priori guarantees that), no matter what $Y,U$ are observed, bookie will provide a $B$ such that  $B_{\theta,\delta(Y)} \leq \ell/\bar{R}(\delta)$ then DM should accept to play this game, for her expected gain is $\geq 0$; if this cannot be guaranteed, then DM has no guarantee that the expected gain is nonnegative.}
This directly follows from (\ref{eq:TheBoundB}); note there is no need for DM to know the details of how $B=b(Y,U,\theta, \delta(Y))$ is defined, as long as the promise $B_{\theta,\delta(Y)} \leq \bar{R}^{-1}(\delta)$ always holds. 

In all examples to follow, we consider a {\em simple\/} bookie, by which we mean that the $B=b(y,u)$ presented is a single number that can be written as a function of $Y$ and $U$ only, and does not vary with  $\theta$ or $a$. The effect of this is that, both for a Bayesian DM and for an e-posterior minimax DM,  knowledge of $B$ does not give any incentive to change the action taken: for any given prior $W$, a decision rule $\delta$ is Bayes relative to loss function $L$ iff it is Bayes relative to loss function $L'(\theta,a) := b(y,u) \cdot L(\theta,a)$; and the same holds for e-posterior minimaxity. 
Bookies who provide a $B_{\theta,a}$ depending on $\theta$ and $a$ are certainly worth looking at as well, but the analysis is substantially more difficult and will be left for future work. 
\begin{example}{\bf [Policy Makers vs. the Decision Maker Interpretation]}
\label{ex:science}
While the bookie interpretation may seem rather abstract, here we show that it can translate into practically  relevant  settings in which  risk bound  (\ref{eq:TheBoundB}) has actual decision-theoretic consequences, allowing  one to reliably make more extreme decisions if the data are more extreme. In standard hypothesis testing problems, this cannot happen, since there are only two actions,  `reject' and `accept', and then `more extreme actions'  do not exist. But in practical testing problems there  is often an additional factor, measuring the {\em importance\/} of the decision in light of the observed data. 
%
The following real-world example  (in essence taken from \cite{Grunwald22}) illustrates:  a study about vaccine efficacy ({\sc ve}) in a pandemic has been set up as a test between null hypothesis $\text{\sc ve} \leq 30\%$ and alternative $\text{\sc ve} \geq 50\%$. The original plan was to vaccinate all people above 60 years of age if the null is rejected. But suppose the null  actually gets rejected with unexpectedly strong evidence for the alternative --- there is substantially more evidence than policy makers had hoped for ---  and at the same time the virus' reproduction rate may be much higher than anticipated. Based on  the observed data and the changed circumstances, the policy makers might now contemplate an alternative action: vaccinate every adult, instead of everyone above 60. 
This would change the losses of all decisions. We can model this (in an admittedly highly stylized way) by assuming that the loss incurred is really  $B \cdot L(\theta,a)$ where $B$ is some positive number depending on how the decision `$\delta(y) = a$' really gets translated into practical measures. For example, $\delta(y)=1$ (vaccine effective) may translate into `vaccinate everyone over the age of $V$'. If  $L(1,1)$ and $L(0,0)$ are both negative (as we allow),  then the lower $V$, the higher the value of $B$ will be.  
As indicated above, we simplify things by assuming that the  $B = b(y,u)$ ultimately chosen does not depend on $\theta$ or the action taken; all losses are multiplied by the same amount. 
In practice, the employed weight  $B$ might arise from a government committee (say the `policy makers') that analyzes the vaccine study results independently from DM, potentially in a very different way (e.g. fully Bayesian, or just informally by eyeballing graphs). The more convincing they determine the evidence to be, the higher the $B$  they may propose; similarly, $B$ may depend on external  $U$ --- if the question is more pressing then one might want to take more drastic measures so the $B$'s go up.  The role of the DM (who should now perhaps be called the `final risk checker') is simply to formulate an absolute upper bound $\ell$ for the risk and tell the policy makers: 
as long as you guarantee policies 
so that $B \cdot \bar{R}(Y,\delta(Y)) \leq \ell$, I can guarantee you a risk of no more than $\ell$ (here $\ell$ should be determined before the data become available; in contrast, the details of how $B$ arises need not be known and can be arrived at by `vague' means, as long as $B$ is conditionally (given the data) independence of $\theta$ and $a$). 
\end{example}
\paragraph{The Quasi-Conditional Stance}
By stressing the importance of (\ref{eq:TheBoundB}), we adopt a philosophy  closely related to Neyman's (\citeyear{Neyman50}) influential {\em inductive behaviour\/} view of statistics --- we cannot say anything about our performance for  an individual study, but we can make claims about our expected performance (or, by the law of large numbers and concentration inequalities, with high probability, about our average performance in several runs).
One can say less than under strict Bayesian assumptions, i.e the assumption that one fully believes one's prior: in that case, the assessments one makes will be conditionally correct, and not just in the average sense of (\ref{eq:TheBoundB}). But there is still a `conditional' aspect to (\ref{eq:TheBoundB}), in the informal sense that more extreme data (small $\bar{R}(\delta)$) leads to a decision rule being acceptable at higher `stakes' (value of $B$). More formally, upon observing $\bar{R}(\delta) = r$, a Bayesian who insists on risk bounded by $\ell$ would condition on this event and hence accept importance weight $B$ iff $B \cdot r \leq \ell$ --- a simple bookie can present any $B$ he likes and the Bayesian can decide based on $r$ whether to accept or not. 
Our DM can safely accept $B$ if the subtly stronger property holds that  it is known {\em a priori\/} that the bookie must present a $B$ with $B \cdot  \bar{R}(\delta)\leq \ell$, no matter the value $\bar{R}(\delta)$ takes, or equivalently, DM can tell, after observing $\bar{R}(\delta) = r$, bookie that he must present a $B \leq \ell/r$ and bookie is guaranteed to obey. Thus, our DM can safely accept higher stakes if better bounds are observed and is thus more flexible than a DM in the unconditional frequentist Neyman-Pearson setting; but only a subset of the games that are acceptable to the conditioning Bayesian is acceptable to the DM --- in this sense, she is `quasi-conditioning'. Such substantial consequences ensuing from subtle differences in the protocol between bookie and DM are also found, in a different context, in \citep{GrunwaldH11}, and are a subject matter of the Vovk-Shafer theory of {\em game-theoretic probability\/} \citep{ShaferV19}.
They motivate the  term {\em quasi-conditional}, emphasizing that the DM may behave in some but not all senses as if conditioning on the data; and that her final performance 
is evaluated in expectation over all possible data, and not conditionally on any given data.

To our knowledge, this is a new paradigm. \commentout{
The classical Neyman-Pearson approach and general frequentist decision theory typically provides methods for a specific, a priori given loss function (e.g. minimax estimators) or decision problem (e.g. confidence distributions for valid confidence intervals, see below) and cannot deal with data-dependent weights $b(y)$ --- which would e.g. be akin to setting  a confidence level $1-\alpha$ with $\alpha$ depending on the data \citep{Grunwald22}. PAC-Bayesian approaches likewise provide algorithms that only work for a fixed, a priori given loss function. } 
It is certainly different from the main existing attempts to unify Bayesian ideas (represent posterior uncertainty by a data-dependent distribution over parameters) and frequentist guarantees  --- here we think of (a) objective Bayes methods with {\em matching\/} priors chosen to get frequentist coverage \citep{BergerBS22}; (b) confidence and the related {\em fiducial\/} distributions \citep{SchwederH16}; and (c) conditionalist frequentist methods such as BBW's. The example below illustrates this for (a) and (b), focusing on the normal location family, for which matching objective Bayes priors, standard objective Bayes priors, confidence distribution and fiducial distribution all agree. It shows that --- despite the matching! --- if one were to use them as if they were standard posteriors in decision problems with an incentive to play more extreme actions with more extreme data, they fail to give proper frequentist error bounds.
(c) is more subtle: in Section~\ref{sec:conditionalistfrequentist} we show that  BBW and other conditionalist frequentist approaches can be re-interpreted in terms of e-posteriors --- so they can be used against arbitrary loss functions even though there were not designed as such. However, these e-posteriors only give meaningful $(< \infty)$ bounds for a small subset of all decision rules (in the BBW case, effectively only for a single one (!)), making them highly restricted in practice  --- whereas other e-collections such as the Savage-Dickey and LR ones give meaningful bounds for essentially any decision rule. 
\begin{example}{\bf [Weighted Squared Error]}
\label{ex:gaussexample}
Take the Gaussian location family with variance 1 as in Example~\ref{ex:normal} and~\ref{ex:discrete}, where we allow data-dependent stopping times such as `$\tau$ is the smallest $n$ at which $x_n > 5$'), defined in the standard manner \citep{Williams91,hendriksen2021optional}. Hence, data $Y$ takes the form $Y= (\tau,X^\tau)$ with $\tau$ a stopping time and $X^\tau$ a vector of $\tau$ outcomes.  Suppose we aim to find the estimator with smallest mean square error, where the importance of the problem at hand can depend on the data  via some function $B = b(Y)$ (we will ignore external circumstances $U$); it is known to DM that $B$ is some deterministic function of $Y$. 
We thus set
\begin{equation}\label{eq:squarederror}
L(\theta,a) =  (\theta- a)^2,
L_{\curry}(\theta,a) := b(y) L(\theta,a) 
\end{equation}
where $L_{\curry}$ is the loss function we are interested in in the end, 
a squared error loss with importance weighted by some function $b(y)$. In general, $B=b(Y)$ may again be determined by policy makers  who may reconsider, using potentially vague ways, the importance of the problem in light of the observed data as in Example~\ref{ex:science} above. We can think of any estimator $\breve\theta$ for $\theta$ as a decision rule, that, upon observing $Y= (n,X^n$), outputs $\breve\theta:= \breve\theta(n,X^n)$. 

We first consider how one would go about this problem in an `objective Bayes' manner. The standard (uniform, improper) `objective Bayes' prior for this family and data $x^n$ corresponds to the limit for $\lambda \downarrow 0$ in (\ref{eq:posterior}). The ensuing posterior $W^{\circ} \mid x^n$ has a normal density $w^{\circ}(\theta \mid x^n)$ with mean and median equal to the maximum likelihood estimator (MLE) $\hat\theta(x^n) = n^{-1} \sum_{i=1}^n x_i$
and variance $1/n$, and it is `matching' \citep{BergerBS22} in the sense that $1-\alpha$ credible posterior intervals (taken symmetrically around the posterior mean) coincide exactly, for each fixed $n$, with $1-\alpha$ confidence intervals. In this case the objective Bayes posterior also coincides with the {\em fiducial\/} and the {\em confidence\/} distribution  \citep{SchwederH16} based on $x^n$. 
The name `confidence distribution' already warns that this posterior should perhaps only be used for `confidence' (statements about confidence intervals) but in the fiducial and objective Bayes world, it seems to have been advocated more generally. That this is problematic was already exhibited in different settings  by \cite[Example 7]{grunwald2018safe} and \cite{balch2019satellite}.
As is well-known, Bayesian posteriors do not depend on the definition of the stopping time \citep{BergerW88,hendriksen2021optional}; also, they condition on $Y= (n,X^n)$ so further conditioning on $B = b(Y)$ also does not affect them, since it is completely determined by $Y$. Thus we have $w^{\circ}(\theta \mid Y = (n, X^n),B=b_0)= w^{\circ}(\theta \mid x^n)$ \citep{hendriksen2021optional}
so that the Bayes optimal action based on (\ref{eq:squarederror}) and $W^{\circ}(\theta \mid Y= (n,x^n),B=b_0)$ is simply the MLE, $\hat\theta(x^n)$ irrespective of the definition of $\tau$, the definition of $b$ and the value of $b(y)$. Now, based on this posterior,
the loss we {\em think\/} we make
upon observing $Y=(n,x^n)$ is given, as in (\ref{eq:bayesdecision}), by
\begin{equation}\label{eq:obayesassessment}
\bar{R}^{\textsc{bayes}}(W^{\circ} \mid n,x^n) = {\bf E}_{\bar{\theta} \sim W^{\circ} |x^n} [L_{\curry}(\bar\theta,\hat\theta)]
= \frac{b(n,x^n)}{n},\end{equation}
where we used that $W^{\circ} \mid Y$ is a normal with mean $\hat\theta$ and variance $1/n$, independently of the definition of the stopping time random variable $\tau$.  
Therefore, if data are in fact sampled from $\theta^*$, then the  average of the losses we expect to make, in  several studies, is given by 
\begin{equation}\label{eq:above}
 {\bf E}_{(\tau,X^\tau) \sim P_{\theta^*}}[{\bf E}_{\bar\theta \sim W^{\circ} \mid X^\tau}  [L_{\currY}(\bar{\theta},\hat\theta(X^\tau))]] = 
   {\bf E}_{(\tau,X^\tau) \sim P_{\theta^*}}\left[ \frac{b(\tau,X^\tau)}{\tau}\right],
 \end{equation}
whereas the loss we {\em actually\/}  make on average is 
\begin{equation}\label{eq:rstar}
\bar{R}^* := {\bf E}_{(\tau,X^{\tau}) \sim P_{\theta^*}}[L_{\currY}(\theta^*,\hat\theta(X^{\tau}))] = 
{\bf E}_{(\tau,X^\tau) \sim P_{\theta^*}}[
b(\tau,X^\tau) \cdot (\theta^* - \hat\theta(X^\tau))^2 
]
\end{equation}
We now show that the difference between believed risk (\ref{eq:above}) and actual (\ref{eq:rstar}) can be unbounded, i.e. we can get it to be at least $k^*$, for arbitrarily large $k^*> 0 $. To this end, simply set $b(\tau,X^\tau):= \tau$  and set the stopping time $\tau$ to be the smallest $n$ such that $(\hat\theta(x^n) - \theta^*)^2/n \geq k^*$. By the law of the iterated logarithm \citep{Williams91} we know that $\tau$ is finite, $P_{\theta^*}$-almost surely, under any $\theta^* \in \Theta$.
With this definition of $b$, the believed risk  (\ref{eq:above}) is $1$ yet the actual risk (\ref{eq:rstar}) is at least $k^*$. By Proposition~\ref{prop:TheBound} such a discrepancy is impossible if we base our risk assessment on e-posteriors via (\ref{eq:riskassessment}) instead. This will be made concrete in Example~\ref{ex:normalcontinued} and~\ref{ex:logn} below.  
\end{example}
\commentout{
To make this concrete, in Example~\ref{ex:normalcontinued} we show that for the  e-posterior $\bar{P}(\theta \mid Y)$ of Example~\ref{ex:discrete}, which uses an e-collection based on an a priori guess $n^*$ of $n$ but, as we show there, is valid for arbitrary stopping times,  the assessment (\ref{eq:riskassessment}) becomes:
\begin{equation}\label{eq:squarederrorbound}
\max_{\theta \in \reals}  (\theta- \hat\theta)^2 \bar{P}(\theta \mid y)
\approx 1.45 \cdot \frac{1}{n} \cdot g(n,n^*),
\end{equation}
where $g(n,n^*) = 
\left(\frac{n^*}{n}\right) \cdot  e^{n/n^*-1}$. Note that $g(\cdot,n^*)$ has a minimum at $n=n^*$ with  $g(n,n^*) =1$ and increases very slowly for $n/n^*$ close to 1. 
Comparing this to (\ref{eq:obayesassessment}) we see that we can avoid any overconfidence of the standard fiducial, confidence or `objective Bayes' posterior by multiplying our loss assessment by a factor that, if we anticipate the sample size almost right, takes value of about $1.5$. If the actual $n$ turns out to be very different from the anticipated $n^*$, then the bound (\ref{eq:squarederrorbound}) gets very loose --- but this is as it should be, since it allows for (\ref{eq:TheBoundB}) to hold --- avoiding overconfidence in general. 
\commentout{This does mean that to guard against possible changes in the sample size, one {\em does\/} need to gather about $50\%$ more data, even if one samples exactly as many points as one originally planned  --- and one may speculate that, if we would actually really start {\em doing this\/} in e.g.\ the medical and psychological sciences, the ongoing {\em replicability crisis\/} would largely disappear.}

If one fears that no good guess $n^*$ of the value taken by $\tau$ can be given a priori, on can use instead a modification of the Savage-Dickey e-collection of Example~\ref{ex:normal} to get, as explained in Example~\ref{ex:logn}, using (\ref{eq:boundy}) with $\lambda=1$: 
\begin{equation}\label{eq:boundz}
\max_{\theta \in \reals}  (\theta- \hat\theta)^2 \bar{P}(\theta \mid y)
\leq  \frac{2 }{n} \left(\frac{1}{2} \log 
({n + 1}) + 
\frac{n}{n+1} \hat\theta^2 
\right),
\end{equation}
where we note that this formula holds for all $n$.
}
\section{A Small Taxonomy of E-Specifications}
\label{sec:taxonomy}
Existing papers on e-variables such as \cite{TurnerLG21,perez2022estatistics,wasserman2020universal} consider $P_{\theta}$ that define a random process $X_1, X_2, \ldots$, and then they provide constructions for an e-variable for sample size $n$, for each $n$. Thus, they really provide e-{\em specifications\/}: a sequence of e-variables $S^{[1]}_{\theta}, S^{[2]}_{\theta}, \ldots$, for each $\theta \in \Theta$, where $S^{[n]}_{\theta}$ is an e-variable for data $Y= (X_1,\ldots, X_n)$. 

Straightforwardly, upon defining $q(X^n) = p_{\theta}(X^n) S^{[n]}_{\theta}(X^n)$, we have $\int q(x^n) d \mu(x^n) \leq 1$ where $\mu$ is the underlying measure, so $q$ can be thought of as a probability density (if $\int q d \mu < 1$, we may think of the missing mass being put on an outcome that, under $P_{\theta}$, has probability $0$). $S_{\theta}^{[n]}$ can then be written as a likelihood ratio $q(X^n)/p_{\theta}(X^n)$. Thus, in the {\em simple\/} (no-nuisance) parameter case, {\em we can rewrite every e-variable as a likelihood ratio}. Importantly though, $q$ is allowed to depend on $n$ and $\theta$; the specification $S_{\theta}^{[1]},S_{\theta}^{[2]},\ldots$ may deliver, for different $n$ and $\theta$, entirely different $q$'s. To emphasize this possibility, we will from now on make the potential dependence on $n$ and $\theta$ explicit, and henceforth write
\begin{align}\label{eq:eprocesses}
    S^{[n]}_{\theta} = \frac{q^{[n]}_{\leftlsquigarrow \theta}(X^n)}{p_{\theta}(X^n)}. 
\end{align}
For an important subclass of  e-variable specifications, the corresponding $q^{[n]}_{\leftlsquigarrow \theta} = q_{\leftlsquigarrow \theta}$ does not depend on $n$ after all (it may still depend on $\theta$). In that case $q_{\leftlsquigarrow \theta}$ defines a random process for $X_1, X_2, \ldots$ and the specification $(S^{[n]}_{\theta})_n$ is called an {\em e-process\/} \citep{ramdas2022testing}. E-processes have a major advantage: for every e-process, for every stopping time $\tau$ (e.g.\ `stop at the smallest $n$ at which $S^{[n]} \geq 1/\alpha$, and set $\tau$ to this $n$'), we have \citep{ramdas2022testing}:
$$
{\bf E}_{X^{\tau} \sim P_{\theta}}\left[ S^{[\tau]} (X^{\tau})\right]
=
{\bf E}_{X^{\tau} \sim P_{\theta}}\left[ \frac{q_{\leftlsquigarrow \theta}(X^{\tau})}{p_{\theta}(X^{\tau})}\right] \leq 1. 
$$
As a result, with e-processes we can engage in {\em optional stopping\/} --- under any stopping time $\tau$ --- even the aggressive stopping rule mentioned above or some externally imposed rule, the details of the definition of which we do not need to know --- $S^{[\tau]}_{\theta}$ is still an e-variable, and decision making based on (\ref{eq:riskassessment}) as above can proceed. With e-specifications that do not provide e-processes, one cannot engage in such optional stopping  (see the extended discussion by \cite[Section 5]{GrunwaldHK19}). 
Indeed, in Example~\ref{ex:gaussexample}, which involved a non-constant stopping time $\tau$, one only gets valid assessments with e-processes rather than mere e-variables, and we shall use these in Example~\ref{ex:normalcontinued} and~\ref{ex:logn} to provide risk assessments for that setting. 

We will now reconsider the e-collections of all our running examples --- as we will see, some provide e-processes, others do not. 
\subsection{Generalized Savage-Dickey E-Processes}
\label{sec:generalizedsd}
We have already seen two instances of e-processes: the (specification, for each $n$, of) Savage-Dickey e-variables $S^{\sd}$ and the likelihood ratio e-variables $S^{\lr}$. Both are instances of what we may call {\em generalized Savage-Dickey e-processes\/} $(S^{[n]})_{\theta}$, with
\begin{align}\label{eq:SD}
    S^{[n]}_{\theta} =  \frac{p_{W \leftlsquigarrow \theta}(X^{n})}{p_{\theta}(X^{n})}
\end{align}
where $p_W(X^n) = \int p_{\theta}(X^n) dW(\theta)$ is a Bayes marginal distribution on $n$. Thus, the e-variable numerator is a Bayes marginal, but the prior $W$ one uses is allowed to depend on the parameter $\theta$ one is comparing to. 
\begin{example}{\bf [Gaussian Location, Example~\ref{ex:discrete} Continued]}\label{ex:normalcontinued}
Fix some {\em anticipated\/} sample size $n^*$, representing our best a priori guess of the actual sample size $n$ that we will observe. Our aim is to obtain an e-process that is valid for all $n$, including $n \neq n^*$, but is optimized to give the best possible risk bound if we happen to observe $y =x^n$ with $n$ equal or close to $n^*$. To this end, for each $\mean \in \reals$ we define $\meanmin < \mean$ and $\meanplus > \mean$ to satisfy (\ref{eq:cinzia}) with $C=1$.
The construction below gives a valid e-process for every $C > 0$,  but $C=1$ gives the best bounds for the squared error, at the cost of potentially somewhat worse yet of course still valid, bounds for other loss functions. 
Now define, for each $n$, the e-variable $S^{[n]}_{\mean}(y) =  \frac{(1/2) p_{\meanmin}(y)+ (1/2) p_{\meanplus}(y)}{p_{\mean}(y)}$. Clearly the $S^{[n]}_{\theta}$ form a generalized Savage-Dickey e-process, with $W \leftlsquigarrow \theta$ the uniform distribution on $\{\meanmin,\meanplus\}$. 
In the Supplementary Material (Section~\ref{app:mleminimax}, Proposition~\ref{prop:KLahoy}) we show that for the corresponding e-posterior $\bar{P}(\theta \mid y)$, we have that
$$
\min_a  \max_{\theta \in \reals}  (\theta- a)^2 \bar{P}(\theta \mid y)
$$
is achieved by setting $a = \hat\theta$ (independently of $n$ and $n^*$), 
so that $\deltamm(y) = \hat\theta$, and we also show that (Section~\ref{app:normalproofs})  (\ref{eq:riskassessment}) evaluates to \begin{equation}\label{eq:superbound}
\max_{\theta \in \reals} (\theta- \hat\theta)^2 \bar{P}(\theta \mid y)
= 
\frac{1}{n} \cdot g(n,n^*) \cdot 1.45 \ldots 
\overset{\text{if\ } n^*=n}{=}\frac{1}{n} \cdot 1.45\ldots
\end{equation}
where $g(n,n^*) = 
\left(\frac{n^*}{n}\right) \cdot  e^{n/n^*-1}$. Note that $g(\cdot,n^*)$ has a minimum at $n=n^*$ with  $g(n,n^*) =1$ and increases very slowly for $n/n^*$ close to 1. The maximum over $\theta$ is, for $n =n^*$, 
achieved approximately at $\theta = \hat\theta \pm 1.46/\sqrt{n}$  at which $\bar{P}(\theta \mid y) \approx 0.68$, independently of the value of $n=n^*$.
$S_{\theta}^{[n]}$ gives an e-process for any $n^*$ we choose in (\ref{eq:cinzia}). 
In the Supplementary Material, Section~\ref{app:normalproofs}, we show that (\ref{eq:superbound}) still holds up to lower order factors, for general 1-dimensional families, as long as they are restricted to a compact subinterval of the parameter space. 

Incidentally, this shows that for {\em standard\/} decision problems (e.g. $n$ fixed in advance, uniform weight function),  the e-posterior based risk bounds (\ref{eq:riskassessment}) are not much worse than the Bayesian risk assessment and minimax risk bounds for the same model, as long as  sufficiently `clever' e-collections are used: the factor is a modest $1.45$ if $n$ can be assumed known in advance, and a $\log n$-factor otherwise. 
\end{example}

\begin{example}{\bf [Hypothesis Testing with Generalized Savage-Dickey and LR]}\label{ex:testinggsd} Consider a simple-vs.-simple test as in Example~\ref{ex:simpletesting}. From (\ref{eq:SD}) we see that the generalized Savage-Dickey e-variable at fixed sample size $n$, with $Y=X^n$, and based on priors $W \leftlsquigarrow \theta$ with mass function $w_{\theta}$, for $\theta \in \Theta = \{0,1\}$, is given by 
\begin{align}
    S_\theta (y) = \frac{w_\theta(0) p_0(y) + w_\theta(1) p_1(y)}{p_\theta(y)} \ {\text{with}}\ w_\theta(1) = 1- w_\theta(0).
\end{align}
For degenerate priors with $w_0(0)= w_1(1)=0$ this reduces to the LR e-process of (\ref{eq:lrassessment}), and for `pure' (i.e.\ nongeneralized) Savage-Dickey e-posteriors, we have $w_0 = w_1$. 
Consider Wald-Neyman-Pearson loss functions under Condition Zero ({\bf C0}) as in Example~\ref{ex:simpletesting}.
Generalizing (\ref{eq:lrassessment}), the e-posterior minimax decision rule $\deltamm$ upon observing $Y=y$ is then given by $\drmm{b}(y) =a$, for $a$ achieving
\begin{align}\label{eq:twohypothesessd}
    & \min_{a \in \{0,1\}} \max \left\{ L_{\currz}(0,a) \cdot\frac{p_0(y)}{w_0(0) p_0(y)+ w_0(1) p_1(y)} , L_{\currz}(1,a) \cdot\frac{p_1(y)}{w_1(0) p_0(y)+ w_1(1) p_1(y)} \right\}  \nonumber \\ 
    \overset{\text{{\bf C0}}}{=} & \min_{a \in \{0,1\}} \left(  L_{\currz}(0,a) \cdot\frac{p_0(y)}{w_0(0) p_0(y)+ w_0(1) p_1(y)} +  L_{\currz}(1,a) \cdot\frac{p_1(y)}{w_1(0) p_0(y)+ w_1(1) p_1(y)} \right).
    \end{align}
It follows that under ${\bf C_0}$, $\deltamm$ selects hypothesis $0$ ($a=0$) iff
$
f\left( \frac{p_0(y)}{p_1(y)}\right) \geq \frac{L_{\currz}(1,0)}{L_{\currz}(0,1)} \text{\ where\ } 
f(r) = r \cdot \frac{(1-w_1) r + w_1}{(1-w_0) r + w_0} 
$
with $w_j = w_j(1)$. Straightforward calculus gives that, irrespective of the choice for $w_0$ and $w_1$,  $f(r)$ is strictly increasing in $r$, so that there is a unique point $r^*$ depending on $w_1$ and $w_0$, such that for all $r \geq r^*$, $\drmm{b}(y) = 0$ and for all $r < r^*$,   $\drmm{b}(y) = 1$. 
Further, for any pure Savage-Dickey e-collection with $w_1 = w_0$, we have $f(r) = r$, i.e.\ we select action $1$ if 
\begin{align}\label{eq:SDisBayesUniform}
    p_1(y) \cdot (L_{\currz}(1,0))^{\gamma} \geq  p_0(y) \cdot (L_{\currz}(0,1))^{\gamma} 
\end{align}
for $\gamma =1$. This is equivalent to taking the Bayes decision under a uniform prior. On the other hand, for the LR e-collection, $\drmm{b}(y)$ selects action $1$ if the above holds for $\gamma = 1/2$. 
Finally, if the loss  is symmetric, $L_{\currz}(0,1) = L_{\currz}(1,0)$, then in fact we select action $1$ iff $p_1(y) > p_0(y)$ even for general $w_1 \neq w_0$.

How about our basic risk assessment (\ref{eq:riskassessment}) for general decision rules, including these minimax ones?
While (\ref{eq:riskassessment}) does not give rise to easily interpretable formulae that work for all generalized Savage-Dickey e-variables, under {\bf C0}  we do get intuitive expressions for pure Savage-Dickey  e-variables with $w_0=w_1=1/2$ the uniform prior, and  for LR e-collections. 
For the former, we can replace the `$\max$' by a `$+$' using {\bf C0} as in (\ref{eq:twohypothesessd}) and we get:
\begin{align}
 &   L_{\currz}(0,\dr{b}(y)) \cdot\frac{p_0(y)}{\frac{1}{2} p_0(y)+ \frac{1}{2}  p_1(y)} +  L_{\currz}(1,\dr{b}(y)) \cdot\frac{ p_1(y)}{\frac{1}{2}  p_0(y)+ \frac{1}{2}  p_1(y)}  = \nonumber
  \\   &  2 \cdot
  L_{\currz}(0,\dr{b}(y)) \cdot w_{\textsc{u}}(0\mid y)  +  2 \cdot 
  L_{\currz}(1,\dr{b}(y)) \cdot w_{\textsc{u}}(1 \mid y) 
   \label{eq:twohypothesessdb}
    =   2\cdot  {\bf E}_{\theta \sim W_{\textsc{u}} \mid y} \left[  L_{\currz}(\theta,\dr{b}(y))
    \right] 
    \end{align}
    where $w_{\textsc{u}}(j \mid y) = \frac{p_j(y)}{p_0(y)+ p_1(y)}$ is the posterior based on uniform prior  $w_{\textsc{u}}$ with $w_{\textsc{u}}(0) =w_{\textsc{u}}(1)= 1/2$, showing that our bound is always within a factor $2$ of the best Bayes-with-uniform-prior bound and hence within a factor of less than 2 of the standard frequentist minimax bound for this setting, obtainable by a procedure that depends on the specific $L$ of interest.  
%
    For the LR e-variables, we similarly get, refining (\ref{eq:lrassessment}),
\begin{align}\label{eq:twohypothesessdc}
    &  L_{\currz}(0,\dr{b}(y)) \cdot\frac{p_0(y)}{p_1(y)} +  L_{\currz}(1,\dr{b}(y)) \cdot\frac{ p_1(y)}{p_0(y)}  \nonumber
  = \\  &  
    L_{\currz}(0,\dr{b}(y)) \cdot 
  \frac{w_{\textsc{u}}(0 \mid y) }{w_{\textsc{u}}(1 \mid y) }
  +  L_{\currz}(1,\dr{b}(y)) \cdot 
  \frac{w_{\textsc{u}}(1 \mid y)}{{w_{\textsc{u}}(0 \mid y) }}  
 = {\bf E}_{\theta \sim w_{\textsc{u}} \mid y} \left[ 
    \frac{1}{w_{\textsc{u}}(\dr{b}(y) \mid y )} \cdot L_{\currz}(\theta,\dr{b}(y))
    \right]. 
    \end{align}
We note that for $\delta :=\deltamm$ and the symmetric case $L_{\currz}(0,1) = L_{\currz}(1,0)$, we choose action 1 iff $p_1(y) \geq p_0(y)$ so that $w_{\textsc{u}}(\drmm{b}(y) \mid y)\geq 1/2$: the LR bound (\ref{eq:twohypothesessdc}) dominates the uniform-prior Savage-Dickey bound (\ref{eq:twohypothesessdb}), and $ 1/w_{\textsc{u}}(\drmm{b}(y) \mid y )$ in (\ref{eq:twohypothesessdc}) is a {\em luckiness factor}, bounded by $2$: if we happen to assign high posterior to the selected hypothesis, we are `lucky' and our loss assessment will get tighter; still, it is valid irrespective of the value of this posterior. If asymmetric losses are observed, $ 1/w_{\textsc{u}}(\drmm{b}(y) \mid y )$  can in general be both larger and smaller than 2, and the LR risk assessment bound does not dominate the Savage-Dickey one.
\end{example}
\subsection{Conditionalist-Frequentist E-Specifications --- not  E-Processes}
\label{sec:conditionalistfrequentist}
We first continue the
  {\em Berger-Brown-Wolpert\/} (BBW) simple-vs.-simple hypothesis testing setting of Example~\ref{ex:bbwfirst}, later extending it to more general conditionalist frequentist inference. Let $V := p_1(Y)/p_0(Y)$. We will assume  that (a)  $V$ is  strictly increasing in $Y$, which (b) has a (Lebesgue) density under $P_0$ and $P_1$. Both assumptions are merely for simplicity; for example (a) can be dropped by directly analyzing $V$ instead of $Y$,  i.e.\ `the likelihood ratio of the likelihood ratio'.

Fix some $w$ with $0 < w < 1$. Since, for any $y^*$, the sum of both integrals below is $1$, there must be a specific $y^*$  such that 
\begin{equation}\label{eq:ystar}
\int_{y < y^*} ((1-w) p_0(y) + w p_1(y)) d y = w \ ; \  \int_{y \geq  y^*} ((1-w) p_0(y) + w p_1(y)) dy = 1-w, 
\end{equation}
%
so that, upon setting
\begin{align}\label{eq:kbbw}
S^{\bbw(w)}_0(y) = {\bf 1}_{y \geq y^*} \frac{(1-w) p_0(y) + w p_1(y)}{(1-w) p_0(y)} 
\ \ ; \ \ 
S^{\bbw(w)}_1(y) = {\bf 1}_{y < y^*} \frac{(1-w) p_0(y) + w p_1(y)}{w p_1(y)}
\end{align}
we can verify that ${\bf E}_{Y \sim P_\theta} [S^{\bbw(w)}_{\theta}] = 1$ for $\theta \in \{0,1\}$ so that  $\{S^{\bbw(w)}_\theta: \theta \in \{0,1\} \}$ is an e-collection.
The corresponding e-posteriors only give useful  risk assessment bounds (\ref{eq:riskassessment}) in quite restricted situations, however. To be sure, the bound (\ref{eq:riskassessment}) holds (as it must by Proposition~\ref{prop:TheBound}), but it must be trivial for all decision rules except one as soon as we assume --- as will be the case for every Wald-Neyman-Pearson loss, i.e. with $L(0,1) > 0, L(1,0) > 0$. The reason is that 
$\bar{P}^{\bbw(w)}(\theta =1 \mid y) = \infty$ if $y \geq y^*$ and $\bar{P}^{\bbw(w)}(\theta = 0 \mid y) = \infty$ if $y < y^*$: for every loss function $L_{\currz}$ and decision rule $\dr{b}$ there will be $y$ such that $\bar{R}({y,\dr{b}(y)}) = \infty$, unless $L(0,0) \leq 0, L(1,1) \leq 0$  and $\dr{b}$ is the {\em $\bbw(w)$-decision rule}, which  satisfies $\delta^{\bbw(w)}(y) =1$ iff $y \geq y^*$. This is, in fact, the only decision rule that BBW themselves study. 
Note that it must then also be the case that
the minimax decision rule $\drmm{b}$ based on $\bar{P}^{\bbw(w)}$ must coincide with $\delta^{\bbw(w)}$, and thus does not depend on the loss $L_{\currz}$ of interest. This indicates that, for many loss functions, it will not be a very good rule --- it is simply the only rule for which the risk assessment (\ref{eq:riskassessment}) is  nontrivial (and then evaluates to (\ref{eq:kbbwassessment})).  To be sure, BBW provide a result implying that  if the loss function $L$ satisfies ${\bf C0}$, then we may set $w$ as a function of this loss function so that the rule $\delta^{\bbw(w)} = \deltamm$ is to some extent reasonable after all in the following sense: it coincides with the Bayes decision rule  $\deltabayes(y)$ based on prior $W$ with $w(1) = w$ as long as $y \not \in [y^-,y^+]$ for some specific values $y^ - \leq y^+$. They call $[y^-,y^+]$ the {\em no-decision region\/}
and notice that in practice it is often quite small.

This support for $\delta^{\bbw(w)}$ notwithstanding, we feel that the pure Savage-Dickey (with $w=1/2$) and LR e-posteriors should be preferred: at the small price of the slightly larger risk assessments (\ref{eq:twohypothesessdb}) and (\ref{eq:twohypothesessdc}), they (a)  allow us to use these assessments for general decision rules rather than the potentially defective (since independent of $b$) $\delta^{\bbw(w)}$, and  (b) they provide e-processes rather than just e-specifications (it is easy to see that the likelihood ratios (\ref{eq:eprocesses}) corresponding to BBW e-specifications have $q^{[n]}_{\theta}$ depending on $n$ --- BBW e-variables do not remain e-variables under optional stopping, generalized Savage-Dickey e-variables do).

\paragraph{BBW's Original, Conditionalist-Frequentist Interpretation}
For general $p_0$ and $p_1$ 
as above, we may partition the sample space $\cY$ into regions $\{\cY_\alpha: \alpha \in [0,1]\}$ where $\cY_{\alpha} = \{y^+_{\alpha},y^{-}_{\alpha} \}$ are  chosen such that $P_0(Y \geq y^+_{\alpha}) = \alpha$ and $P_1(Y \leq y^-_{\alpha}) = \beta(\alpha)$ for some function $\beta:[0,1] \rightarrow [0,1]$. We may think of observing $Y=y$ as first observing $\alpha$ so that $y \in \cY_{\alpha}$, and only then observing $Y=y$ itself.
We may then, by construction, after observing 
$\alpha$, perform {\em a conditional frequentist test\/} with Type-I and Type-II `conditional'  error probabilities $\alpha$ and $\beta(\alpha)$, by rejecting $\theta =0$ if $Y = y^+_{\alpha}$ and accepting  if $Y = y^-_{\alpha}$. The BBW approach was originally conceived as a specific way to construct such a partition, with the special property that for any $\alpha$ we have that $y^+_{\alpha} \geq y^*, y^-_{\alpha} \leq y^*$ (so that $\deltamm$ invariably rejects if $y^+_{\alpha}$ is observed for some $\alpha$) and
$$
\alpha = \frac{(1-w) p_0 (y^+_{\alpha})}{(1-w) p_0(y^+_{\alpha}) + w p_1(y^+_{\alpha})}, \ \ 
\beta(\alpha) = \frac{w p_1 (y^-_{\alpha})}{(1-w) p_0(y^-_{\alpha}) + w p_1(y^-_{\alpha})}. 
$$
These are exactly our BBW e-posteriors $\bar{P}^{\bbw(w)}(0 \mid y^+_{\alpha})$ and
$\bar{P}^{\bbw(w)}(1 \mid y^-_{\alpha})$ --- BBW in fact call them conditional error probabilities, because that is how they derived them, and note that they are formally identical to Bayesian posteriors; the interpretation in terms of e-posteriors is ours.

\commentout{
I have always found this original interpretation of BBW conditional error probabilities a bit tenuous --- while it  is of course mathematically correct, it is difficult to connect it to long-run frequencies of making the wrong decision, which, it seems, would be essential for a frequentist to embrace it. To see this, consider a long run of independent scientific studies. For each we perform a simple-vs.-simple test and we report the BBW posterior ... 
If we try to be careful, we might want to act upon the conclusion of a scientific study only if the conditional error probability of the chosen action is small enough; but if we are really being strict frequentists, then we have to consider the possibility that, in a long run of independent scientific studies, the null hypothesis is always the case. We would then still, conditional on observing a small conditional error probability, make the wrong decision with probability 1! 
The interpretation in terms of e-variables as given underneath Proposition~\ref{prop:TheBound} does have an immediate link to a real-world setting: it tells us that, in essence, in a deal in which we gain money if our decision is correct, lose if incorrect, we can put more  the more confident our observed e-posterior is; if we follow the prescription at each study, then by the law of large numbers, in the long run we are guaranteed to make a gain almost surely. Of course, there remains the question of how relevant such a guarantee is in practice, but at least it has a direct frequentist interpretation.}
\paragraph{Kiefer and Vovk Confidence Estimators}
Essentially the same analysis as for BBW conditional error probabilities also applies to general {\em conditionalist frequentist e-variables}, based on Kiefer's and Vovk's conditional confidence estimators, to which we now turn: every such confidence estimator gives a valid e-specification, hence e-posteriors for each $n$, but in general these will not be e-processes, and they can evaluate to infinity, hence only giving  useful, nontrivial risk assessments for a restricted set of loss functions and decision rules --- although this set may not be as restricted as in the BBW case, where only a single rule,  $\delta^{\bbw(w)}$ gave  nontrivial risk assessments.  

Consider a partition $\{\cY_{\alpha}: \alpha \in [0,1]\}$ of $\cY$
together with a function $\textsc{CI}(y)$  that maps each  $y \in \cY$ into a subset  of $\Theta$.
In case $\Theta$ is a continuum, we usually take $\textsc{CI}(y)$ to be an interval, so that $\textsc{CI}(y) = [\theta_L,\theta_R]$ with endpoints $\theta_L$ and $\theta_R$ determined by $y$. We say that $(\{\cY_{\alpha}: \alpha \in [0,1]\},\textsc{CI})$ define a {\em  Kiefer-type conditional frequentist confidence set estimator\/} 
if for all $\theta \in \Theta$, all $\alpha \in [0,1]$,
$$
P_{\theta}(\theta \not \in \textsc{CI}(Y) \mid Y \in \cY_{\alpha}) \leq \alpha.
$$
Such confidence estimators were  first considered by 
\cite{kiefer1976admissibility}.  If $\beta(\alpha) = \alpha$, the BBW approach can be reinterpreted as a Kiefer-type confidence estimator: we set $\cY_{\alpha}$ as above and
\begin{equation}\label{eq:kiefermans}
\textsc{CI}(y) = \{ \delta^{\bbw(w)}(y)\} \text{\  so that\ } \textsc{CI}(y^+_{\alpha}) = \{1\}, \textsc{CI}(y^-_{\alpha}) = \{0\}.\end{equation} 
If $\beta(\alpha) \neq \alpha$ then the BBW approach does not give a Kiefer-type confidence estimator.
However, \cite{Vovk93} provided a notion of confidence estimator that generalizes both BBW's conditional error probabilities and Kiefer-type confidence estimators: for any function $\textsc{CI}(y)$ that maps each $y \in \cY$ into a subset  of $\Theta$ combined with any function $\hat\alpha(Y)$ taking values in  $[0,1]$, we say that $\textsc{CI}$ and $\hat\alpha$ define a {\em Vovk-type confidence set estimator\/} if for all $\theta$, 
\begin{equation}\label{eq:vovkconfidence}
{\bf E}_{Y \sim P_{\theta}} \left[ {\bf 1}_{\theta \not \in \textsc{CI}(Y)} \cdot \frac{1}{\hat\alpha(Y)}\right] \leq 1.
\end{equation}
\cite{Vovk93} defines confidence set estimators in this partition-free manner, motivated by a game-theoretic interpretation. He gives a (straightforward) proof showing that any confidence set estimator in the sense of Kiefer is also a confidence set estimator in the sense of Vovk, with $\hat\alpha(y)$ set to the $\alpha$ such that $y \in \cY_{\alpha}$. Similarly, if we set  $\textsc{CI}$ as in (\ref{eq:kiefermans}) and, for each $\alpha \in [0,1]$, $\hat\alpha(y^+_{\alpha}) := \alpha, \hat\alpha(y^-_{\alpha}) = \beta(\alpha)$, then it is easily seen that (\ref{eq:vovkconfidence}) hold with equality for both $\theta \in \{0,1\}$, i.e.\ we obtain a Vovk-type confidence estimator. 

Now, if $(\textsc{CI}(Y),\hat\alpha(Y))$ is a confidence set estimator in sense of Vovk, it directly follows from (\ref{eq:vovkconfidence}) that for all $\theta$, $S_{\theta} = {\bf 1}_{\theta \not \in \textsc{CI}(Y)} 1/\hat\alpha(Y)$ is an e-variable, so $\bar{P}(\theta \mid y)$ is an e-posterior. 
Just as in the specific BBW scenario, such a posterior may become $\infty$.

\section{Condition Zero and Capped Posteriors}
\label{sec:capped}
Consider Example~\ref{ex:normal} again, the normal location family  with the squared error loss (\ref{eq:squarederror}) as in Example~\ref{ex:gaussexample}. If we base our risk assessment directly on the Savage-Dickey e-posterior as defined there, we get bounds that, while correct, are not always sharp. This is due to the fact that, as seen from Figure~\ref{fig:eposterior},  $\bar{P}(\theta \mid Y) = S^{-1}_{\theta}(Y)$ can become very high for $\theta$ close to $\hat\theta$ --- a milder form of the issues with the confidence-estimator based e-variables of the previous section, for which it can become even infinite. There, the issues were avoided if the loss function satisfied Condition Zero. It turns out that a weakened version of this condition is sufficient to avoid this problem, for decision problems more general than testing. We now introduce a new concept, and weaken and  generalize Condition Zero with this in mind: 
\begin{definition}{\bf [Capped E-Posterior; Condition Zero]}
\label{def:capped}
For any given e-posterior $\bar{P}(\theta \mid y)$, its {\em capped\/} version $\capped{P}(\theta \mid y)$ is defined by setting, for all $\theta \in \Theta$, $y \in \cY$: $\capped{P}(\theta \mid y) := \min \{1, \bar{P}(\theta \mid y)\}$.
Let $\delta$ be a decision rule. We say that $\dr{b}$ satisfies {\bf C0}, i.e.\ Condition Zero,  if 
\begin{equation}\label{eq:capped}
\sup_{\theta \in \Theta} \bar{P}(\theta \mid y) L_{\currz}(\theta,\dr{b}(y)) =
\sup_{\theta \in \Theta} \capped{P}(\theta \mid y) L_{\currz}(\theta,\dr{b}(y))
\end{equation}
for every $y \in \cY$.
We say that the {\em loss function\/} $L_{\currz}$ itself  satisfies Condition Zero if there exists a decision rule $\delta$  such that (\ref{eq:capped}) holds. 
\end{definition}
Our original Condition Zero in simple-vs.-simple testing was defined as a condition on a given loss function $L_{\currz}$; the present condition is strictly weaker as a condition on loss functions and generalizes the idea to decision rules $\dr{b}$. Why would we want to impose such a condition?

First note that setting $S_{\theta} = 1$ for all $\theta \in \Theta$ gives a trivial, yet valid, e-collection; it provides no indication at all about $\theta$ based on data $y$. This suggests that, when using a nontrivial collection, based on data $y$, we cannot make any inferences about  $\theta$ for which $S_{\theta}(y) < 1$ (i.e.\ $\bar{P}(\theta \mid y) > 1$): our e-variable provides less evidence against $\theta$ than what can be obtained trivially: in a sense, $y$ gives us no information at all for making inferences about $\theta$. 
It is therefore of interest to restrict to decision rules satisfying the new Condition Zero. For such problems, our risk assessment will {\em never \/} be based on evaluating $\bar{P}(\theta \mid y)$ at a value of $\theta$ for which $\bar{P}(\theta \mid y) > 1$, i.e.\ for which it really does not give any information about this $\theta$. 
Also, it may be argued that the capped e-posterior is a more intuitive tool than the e-posterior: 
for any $\alpha \geq 0$, the set $\{\theta \in \Theta: \bar{P}(\theta \mid y) \geq \alpha\}$ is a $1-\alpha$-confidence interval \citep{Grunwald22} (see  Figure~\ref{fig:eposterior}, middle panel), whereas for sets containing $\theta$ with $\bar{P}(\theta \mid y) \geq 1$, no clear interpretation seems to exist. Thus, by insisting on {\bf C0} one really insists on inferences that can be based on the more intuitive $\capped{P}$ while still satisfying (\ref{eq:riskassessment}).

\begin{example}{\bf [Two-Point prior for  exponential families: Condition Zero holds]} \label{ex:normalzero} The generalized Savage-Dickey two-point prior-based e-posterior that we employed for the Gaussian location family in Example~\ref{ex:normalcontinued} gives rise to e-collections such that the corresponding minimax  $\drmm{b}(y) =\hat\theta(y)$, also satisfy Condition Zero whenever the $b$-weighted squared error (\ref{eq:squarederror}) is used: independently of $n$, $\bar{P}(\theta \mid y) \approx 0.68$ for the maximizing $\theta$, as was established below (\ref{eq:superbound}). For exponential families, we show in the Supplementary Material (Section~\ref{app:normalproofs}, inside the proof of Theorem~\ref{thm:exp}) that the same holds if the model $\{P_{\theta} : \theta \in \Theta^{\circ}\}$ is restricted to an (arbitrary) compact subset of the parameter space and $n$ is large enough. We do not know if the result can be extended beyond exponential families, since its proof makes heavy use of their properties. 
\end{example}
In practice, the DM may be presented with a loss function for which Condition Zero does not hold, for any reasonable decision rule, and the high-risk problem looms.
What to do in such a case? 
There are two alternative solutions. As to the first (we describe the second underneath Example~\ref{ex:logn}), one can (sometimes vastly) improve risk assessments by slightly modifying any given e-variable $S_{\theta}$ to another e-variable $S'_{\theta}$ that provides almost as much evidence against $\theta$ as $S_{\theta}$ whenever $S_{\theta}$ provides more evidence against $y$  than the trivial $1$ (i.e.\ when $S_{\theta}(y) > 1$), and almost as much as the trivial $S=1$ otherwise.  
The corresponding $\bar{P}'(\theta \mid y)= S'^{-1}_{\theta}(y)$
will then be similar to $\bar{P}(\theta \mid y)$ if $\bar{P}(\theta \mid y) \leq 1$ and bounded by a small constant otherwise. Three straightforward ways of doing this are provided by setting  $S'_{\theta}$ equal to any of the following options: 
\begin{align}
 &   S^{*}_{\theta}(y) := \frac{\max \{S_{\theta}(y), 1 \}}{ 
    {\bf E}_{Y \sim P_{\theta}} [\max \{S_{\theta}(Y), 1 \} ]
    } \ \ , \ \ S^{\circ}_{\theta}(y) := \frac{1}{2} \cdot  \max \left\{ 1,  S_{\theta}(y) 
    \right\} \nonumber \\ \label{eq:dampen}
    &  S^{[\gamma]}_{\theta}(y) := (1-\gamma) + \gamma S_{\theta}(y) \text{\ for \ } \gamma = 1/2.
\end{align}
One immediately verifies that all three options produce e-variables: $S^{[\gamma]}$ produces one for all $\gamma$, because mixtures of e-variables are e-variables. Using that the expectation in $S^*_{\theta}$ is bounded by $2$ (to see this, upper bound $\max$ by a sum), we see that  $S^{\circ}$ is dominated by both $S^{*}_{\theta}$ and $S^{[1/2]}_{\theta}$ hence must also be an e-variable and we also see that for all three choices, we have  $\sup_{\theta} \bar{P}'
 (\theta \mid y) \leq 2 \sup (S^{\circ}_{\theta}(y))^{-1} = 2$. 
$S^*_{\theta}$ may be the most direct solution, but it can turn an e-process into an e-specification that is not an e-process anymore, whereas $S^{[1/2]}_{\theta}$ preserves the e-process property. Since $S^{[1/2]}_{\theta}$ dominates $S^{\circ}_{\theta}$, we prefer it; it is depicted in Figure~\ref{fig:eposterior}.
%

\begin{example}{\bf [Improving Pure Savage-Dickey  for  Exponential Families]}\label{ex:logn}
We illustrate 
the idea above for the  case that $\bar{P}(\theta \mid y) = 1/S_{\theta}(y)$ is the Savage-Dickey e-posterior based on an arbitrary prior $W$ on the Gaussian location family.
%
Let $\bar{P}^{[\gamma]}(\theta \mid y) := 1/S^{[\gamma]}_{\theta}(y)$ be the corresponding `dampened' e-posterior as defined in (\ref{eq:dampen}). 
In the Supplementary Material, Section~\ref{app:logn} we describe this setting in more detail and extend it to 1-dimensional exponential families, also showing (in Proposition~\ref{prop:KLahoy}) that for any $0 <  \gamma \leq 1$ the MLE $\hat\theta$ is the $\bar{P}^{[\gamma]}$-e-posterior minimax estimator irrespective of the weight $b$ in the loss (\ref{eq:squarederror}).
Now for simplicity let  $W$ be  a Gaussian prior with some fixed precision $\lambda$ as in Example~\ref{ex:normal} and with prior mean $\theta_0 = 0$. We also show in the Supplementary Material (Proposition~\ref{prop:dampening}), that, for the choice $\gamma = 1/2$ that we advocated above, the bound (\ref{eq:TheBoundA}) and (\ref{eq:TheBoundB}) hold with 
\begin{equation}\label{eq:boundy}
\bar{R}(\deltamm) = \bar{R}(Y,\hat\theta(Y)) = \frac{2}{n} \left(\frac{1}{2} \log 
\frac{n + \lambda}{\lambda} + 
\frac{n \lambda}{n+\lambda} \hat\theta(Y)^2 
\right),
\end{equation}
losing just a ${\log n}$ factor compared to the e-posterior that achieves the standard frequentist minimax bound up to a constant factor,  and that we considered in Example~\ref{ex:normalcontinued}. Again we extend this result to general 1-dimensional families with smooth and strictly positive prior densities (and again we do not know if it extends to more general 1-dimensional families).  
In contrast, for the original Savage-Dickey e-posterior $\bar{P}(\theta \mid y)$ we get a much larger  $\bar{R}(\deltamm) \succeq  n^{-1/2}$, as can be seen by evaluating $  \cdot \bar{P}(\theta \mid y) (\theta - \hat\theta)^2$ at $\theta = \hat\theta + n^{-1/2}$.  By `dampening' the posterior we improved the factor of order  $ n^{-1/2}$ to a factor of order $(\log n)  n^{-1}$. 
\end{example}
As a second solution to the high-risk problem, we can alternatively {\em modify\/} our risk assessment (\ref{eq:riskassessment}) 
for decision problems and rules that do not satisfy {\bf C0}.
We then find that if we replace the e-posterior in the risk assessment $\bar{R}(\delta) $ as in (\ref{eq:riskassessment}) by its capped version, the risk assessment itself may become invalid (i.e.\ $\bar{R}(\delta)$ does not satisfy (\ref{eq:TheBoundA})/(\ref{eq:TheBoundB})), but, for arbitrary data-dependent decision problems, $2 \bar{R}(\delta)$ does remain valid: 
\begin{proposition}\label{prop:TheBoundb}
Set, for arbitrary multi-loss decision problem and  rule $\delta$, not necessarily satisfying ${\bf C0}$, 
\begin{align}\label{eq:cappedriskassessment}
     \capped{R}(Y,A) := \sup_{\theta \in \Theta} \capped{P}(\theta \mid y)  \cdot L_{\currz} (\theta,a)\ \ , \ \ \capped{R} (\delta) := \capped {R}({Y,\dr{B}(Y)}).
\end{align} 
Then (\ref{eq:TheBoundA}) and (\ref{eq:TheBoundB}) hold with $2 \capped{R} (\delta)$ in the role of $\bar{R}(\delta)$.
\end{proposition}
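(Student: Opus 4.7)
My plan is to mimic the proof of Proposition~\ref{prop:TheBound} almost verbatim, but absorbing the cap via the elementary inequality $\max\{1,s\} \leq 1+s$ for $s\geq 0$. The key identity driving everything is
\begin{equation*}
\frac{1}{\capped{P}(\theta\mid y)} = \max\{1,S_{\theta}(y)\} \leq 1 + S_{\theta}(y),
\end{equation*}
which, after taking $P_{\theta}$-expectation, gives $\Exp_{Y\sim P_{\theta}}[1/\capped{P}(\theta\mid Y)] \leq 1+\Exp_{Y\sim P_{\theta}}[S_{\theta}(Y)] \leq 2$. This factor of $2$ is exactly what allows us to replace $\bar{R}(\delta)$ by $2\capped{R}(\delta)$.

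First I would establish the analogue of (\ref{eq:TheBoundA}) with $2\capped{R}(\delta)$ in place of $\bar{R}(\delta)$. Fix any $\theta\in\Theta$. By definition of $\capped{R}(y,a)$, for every $y\in\cY$ we have $\capped{R}(y,\delta(y)) \geq \capped{P}(\theta\mid y)\,L_{\currz}(\theta,\delta(y))$. When $L_{\currz}(\theta,\delta(y)) > 0$ and $\capped{R}(y,\delta(y)) > 0$, dividing gives $L_{\currz}(\theta,\delta(y))/\capped{R}(y,\delta(y)) \leq 1/\capped{P}(\theta\mid y) \leq 1 + S_{\theta}(y)$; when $L_{\currz}(\theta,\delta(y))\leq 0$, the ratio is $\leq 0 \leq 1+S_{\theta}(y)$. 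The case $\capped{R}(y,\delta(y)) = 0$ is handled exactly as in the proof of Proposition~\ref{prop:TheBound}, splitting the expectand using the $0/0 := 0$ convention. Taking $P_{\theta}$-expectation:
\begin{equation*}
\Exp_{Y\sim P_{\theta}}\!\left[\frac{L_{\currz}(\theta,\delta(Y))}{\capped{R}(\delta)}\right] \leq \Exp_{Y\sim P_{\theta}}[1+S_{\theta}(Y)] \leq 2,
\end{equation*}
so dividing by $2$ yields the required bound $\Exp_{Y\sim P_{\theta}}[L_{\currz}(\theta,\delta(Y))/(2\capped{R}(\delta))] \leq 1$.

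Second, the extension to (\ref{eq:TheBoundB}) is a routine consequence of the first step, identical in structure to the derivation of (\ref{eq:TheBoundB}) from (\ref{eq:TheBoundA}) in the proof of Proposition~\ref{prop:TheBound}. Letting $\ell := \sup_{(y,u,\theta)} b(y,u,\theta,\delta(y))\cdot 2\capped{R}(y,\delta(y))$, we have $b(y,u,\theta,\delta(y)) \leq \ell/(2\capped{R}(y,\delta(y)))$ pointwise, and inserting this into the expectation on the left side of (\ref{eq:TheBoundB}) and applying the bound just proved gives the result.

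The only subtlety — and thus the main (mild) obstacle — is bookkeeping for losses that may be negative (allowed under the no-sure-gain condition) and for the edge case $\capped{R}(\delta) = 0$; both are handled by case analysis exactly as in the proof of Proposition~\ref{prop:TheBound}. The nontrivial content of the proposition is entirely captured by the $1+s$ upper bound above; no further structural property of e-variables beyond $\Exp_{Y\sim P_{\theta}}[S_{\theta}] \leq 1$ is needed.
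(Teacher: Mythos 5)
Your proof is correct and is essentially the paper's argument: the paper observes that $\capped{P}(\theta\mid y)=2/S^{\circ}_{\theta}(y)$ with $S^{\circ}_{\theta}=\frac{1}{2}\max\{1,S_{\theta}\}$ an e-variable (which holds precisely because of your inequality $\max\{1,s\}\leq 1+s$) and then invokes Proposition~\ref{prop:TheBound} for the e-collection $\{S^{\circ}_{\theta}\}$, whereas you inline the same computation. No substantive difference.
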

\begin{proof}
Calculating $\capped{R}({y,a})$ based on  the capped posterior 
is equivalent to calculating $\bar{R}({y,a})$ based on the modified posterior $\bar{P}'(\theta \mid y) = (S^{\circ}_{\theta}(y))^{-1}$ as defined in (\ref{eq:dampen}) and multiplying the answer by $2$. Since $\bar{P}'(\theta \mid y)$ is a standard e-posterior and no requirements on $\delta$ are made, we can use Proposition~\ref{prop:TheBound} based on $\bar{P}'(\theta \mid y)$ to get the required result. 
\end{proof}
This solution to the high-risk problem of some e-collections shows that, for some collections, the original e-posterior in its capped version may not be so bad after all. This solution may in fact be preferable, since it does not interfere with the use of e-values in optional continuation: we can keep multiplying e-values as explained in the next section, and get a factor of 2 in the bound for the final product. In contrast, if we use any of the modifications in (\ref{eq:dampen}), we must modify the e-variable at each study, making the final (i.e. when we stop adding studies) product of e-values potentially much smaller\vspace*{-1 mm}. 

\section{Additional Background and Discussion}
\label{sec:discussion}
\paragraph{The General Definition of E-Values}
The definition of e-collection, e-posterior, risk assessment (\ref{eq:riskassessment}) and e-posterior minimax decision rule readily generalize to models with nuisance parameters, composite hypothesis testing and nonparametric settings. All are covered by the following extended definition:
let $\cP$ be a set of distributions for random variable $Y$ and consider a parameter (vector) of interest defined by a function $\delta: \cP \rightarrow \Theta$ with $\Theta \subset \reals^k$ for some $k> 0$.  We now call $\{ S_{\theta}: \theta \in \Theta \}$ an {\em e-collection\/} for $\cP$ if for all $\theta \in \Theta$, $S_{\theta}= S_{\theta}(Y)$ is a nonnegative function of $Y$ and for all $P \in \cP$ with $\delta(P) = \theta$, we have the corresponding analogue of (\ref{eq:basic}), i.e.\ for {\em all\/} such $P$:
\begin{equation}
    \label{eq:evalgen}
{\bf E}_{P}[S_{\theta}] \leq 1. 
\end{equation}
The e-posterior $\bar{P}(\theta \mid y)$ relative to such a collection is still defined as $S^{-1}_{\theta}(y)$, and the definition of risk assessment (\ref{eq:riskassessment}) and e-posterior minimax rule (\ref{eq:minimax}) remains unchanged. It is easily checked that the results that validate the e-posterior risk assessment and minimax rule, Proposition~\ref{prop:TheBound} and Proposition~\ref{prop:TheBoundb}, still hold in this more general setting\vspace*{-1 mm}. 

\paragraph{The Original Interpretation of E-Variables}
E-variables, while being implicit in earlier work going back at least to \cite{darling1967confidence}, were largely unknown until 2019, when the concept was given a name and was popularized and extensively developed by papers such as \citep{GrunwaldHK19,wasserman2020universal,Shafer:2021,VovkW21} 
(the first versions of all four papers came out in 2019, and, together with \citep{howard2020time,howard2021time}, which first came out in 2018 and in which e-values played a central role without an explicit name, may arguably be viewed as {\em the\/} pioneering papers of the field). 
In the mean time they have been studied in a wide variety of contexts \citep{henzi2021valid,RenB22,BatesJSS22} and even had an international workshop devoted to them.  Originally, they were used as tools to extend traditional Neyman-Pearson tests to situations with {\em optional continuation\/} while keeping Type-I error guarantees: suppose one observes a sequence of studies concerning the same null hypothesis $\{P \in \cP: \delta(P) = \theta_0\}$. One can then multiply the e-values derived from the individual studies. Broadly speaking, the product will be a new e-variable, irrespective of whether the decision to perform a subsequent study depends on previous study outcomes (the `continuation rule'), and irrespective of the rule used to decide when to stop performing new studies. 
Since, for any fixed $\alpha$, the probability, under the null, that an e-variable gets larger than $1/\alpha$, is bounded by $\alpha$, one can use e-values, combined by multiplication, as an alternative to p-values in a setting in which optional continuation of experiments is allowed (note that this is always possible, whereas optional stopping {\em within\/} a study is only possible if we deal with e-processes rather than e-variables, as in Section~\ref{sec:taxonomy}).  

\paragraph{Composite Testing}
In light of this original motivation, much initial work on e-values focused on hypothesis testing in Neyman-Pearson (NP) style, but under optional continuation. Just as in the original NP paradigm, Type-I error of decision rules was required to be bounded by a pre-given significance level $\alpha$, and other desiderata such as small Type-II error only came second. This is different from the treatment of simple-vs.-simple testing in the present paper. Here, as in Bayesian and minimax approaches, we do allow asymmetric losses ($L_{\currz}(0,1) \neq L_{\currz}(1,0)$), but these potentially different losses appear in the risk assessment (\ref{eq:riskassessment}) in a symmetric fashion after all. 
 
Existing papers on e-values within the original paradigm in fact emphasize composite hypotheses: 
the theoretical papers \cite{GrunwaldHK19,perez2022estatistics} and the practical \citep{TurnerLG21,TurnerG22} consider situations in which the null has nuisance parameters (such as, for example, the variance in the t-test or the proportion in $2 \times 2$ tables). For such cases (the null $\cP= \{P_{\lambda}: \lambda \in \Lambda \}$ is characterized by a (potentially) high-dimensional parameter vector $\lambda$ ; the parameter of interest $\delta$ is of (potentially) much smaller dimension, often a scalar), \cite{GrunwaldHK19} offer a generic method to construct e-variables as in (\ref{eq:evalgen}) for such situations via the {\em reverse information projection}. This method is employed by all papers mentioned above to construct   e-variables, with the explicit goal of providing Type-I error bounds under optional continuation in the Neyman-Pearson style;  but we can equally use them to provide e-posterior risk assessments and minimax rules with general loss functions as in this paper --- in our examples above, we only used `simple' e-collections for ease of illustration.
Of particular interest is the paper \citep{Grunwald22}, which may be viewed as a companion to the present one. It also extends the use of e-variables to deal with post-hoc specified loss functions 
but it sticks to a variation of the Neyman-Pearson tenet: there is a distinction between Type-I risks (expected losses), on which a hard constraint is imposed (similar to the significance level $\alpha$ as a constraint on Type-I error {\em probability\/} in traditional testing), and Type-II risks, which are minimized subject to this constraint\vspace*{-3 mm}.
\paragraph{Nonparametrics}
The same generalization (\ref{eq:evalgen}) covers  nonparametric settings. For example, 
in Cox's celebrated proportional hazards model, the $P \in \cP$ specify an underlying continuous time process with dynamics conditional on some covariate vector $\vec{x}$. While the full likelihood of this process is intractable, \cite{TerschurePLG21} show that, for any vector $\theta$ of covariate coefficients, the ratio between any two partial likelihoods defined relative to the same set of covariates constitute an  e-process. This implies that all the results on e-posterior risk assessment of the present paper also pertain to ratios of Cox's partial likelihoods. 
In fact, for any $\cP$ and any $\theta \in \Theta$, any ratio between conditional, marginal or partial likelihoods that has the same distribution under all distributions in $\cP$ with $\delta(P) = \theta$ is trivially an e-variable, and can be used to define an e-collection.  
We regard all this as an additional advantage of e-posteriors over Bayesian ones: strictly speaking, a Bayesian needs to specify the {\em full\/}  likelihood for each distribution in $\cP$ in order to get a valid a posterior --- as has been argued by e.g. \cite{Larry2012}, this may be viewed as `overkill' and  poses practical (and unnecessary) difficulties for Bayesian nonparametric inferences especially when the samples are small. In contrast, \cite{waudby2020estimating} provides an e-collection which can be used to learn the mean of a sequence of bounded random variables based on a small sample without {\em any\/} assumptions on the underlying random variables except boundedness and existence of a common mean. 
\commentout{In particular, they may be discrete, continuous or a mixture of both, and this may even change over time.  

{\em Nevertheless}, from a traditional De Finettian subjective Bayesian standpoint (as beautifully built-up from the ground in the initial chapters of \citep{BernardoS94}) one may insist on coherence.
A case in point is Cox' model. 
In general, full likelihoods or posteriors for this model are quite intractable. However, \cite[Section 4.2]{sinha2003bayesian} show that the ratio between Cox' (tractable!) partial likelihood for two different parameter vectors can be recovered as a limiting case of a ratio of  Bayesian marginal full likelihoods based on a gamma process prior, in the limit when the prior becomes extremely diffuse.
A Bayesian who insists on coherence may then still adopt the Cox partial likelihood e-posterior, since it must be the pure Savage-Dickey e-posterior corresponding to a particular (quite special) prior on the model. As seen in Example~\ref{ex:simplesd}, (\ref{eq:meanmax}) and (\ref{eq:maxmean}), the Savage-Dickey e-posterior is  always strictly wider than corresponding Bayesian posterior based on the same prior, which would make its use a particular type of robust Bayesian approach, preserving a type of coherence (more research would be needed into providing the precise definition of coherence that should apply here). More generally, one could then envisage that Bayesians embrace e-posteriors as well, but restrict themselves to those of pure Savage-Dickey form, since they  allow a widened-standard-Bayesian-posterior interpretation as well --- indeed, Savage-Dickey e-posteriors are used to give `evidential' confidence  by some Bayesians, see e.g.~\citep{PawelLW22}. 
}

\paragraph{Conclusion and Future Work}
We have presented the idea of representing uncertainty by e-posteriors based on e-collections. While e-collections already existed, the idea to use them in risk assessments such as (\ref{eq:riskassessment}) and to define e-posterior minimax rules as in (\ref{eq:minimax}) is, to our knowledge, completely new.
There are of course many open questions regarding this quasi-conditional methodology. 
The main one is perhaps: {\em what e-collection should be used in practice?\/}
If one takes as one's goal ``collecting as much evidence as possible in subsequent studies against a single $\theta \in \Theta$'', and one is able to formulate a prior on $\Theta$ representing a `best guess' (note that $\Theta$ does not necessarily index a set of distributions; rather a set of properties) then there is a good case to be made for the GRO (growth-rate optimal) e-variable as defined by \cite{GrunwaldHK19}, which is always well-defined; we refer to that paper for extensive discussion. For simple nulls, the GRO e-variable is always of pure Savage-Dickey form. 
However, if we are interested not in testing but in estimation and want an e-posterior over the full range of $\theta \in \Theta$, then the e-posterior based on the GRO e-variables may not be intuitively optimal, as we saw in this paper: the pure Savage-Dickey e-variable based on a normal prior $W$ in the Gaussian location family (which is GRO relative to $W$) gives worse risk assessments than the one based on the two-point prior as in Example~\ref{ex:discrete}. We do not have a general recommendation (in the style of `reference priors' for Bayesians) for e-collections, and obviously more research is needed here. This paper is really just  a first, exploratory one, comparing various possibilities such as two-point, capped- and dampened e-collections. A second related important question is: 
in what situations is the bound (\ref{eq:TheBoundB}) sufficiently tight to be practically relevant? Essentially, if we are in the setting that (bookie, or policy makers) provide $B$ such that $B \cdot \bar{R}(\delta)$ is 
\commentout{ 
 We should emphasize though that different researchers involved in different studies about the same phenomenon can each use different types of e-collections, say $\{S^{(j)}_{\theta}: \theta \in \Theta \}, j=1,2, \ldots$ and still multiply their e-variables to get a new valid e-collection $\{S^{*}_{\theta}: \theta \in \Theta \}$ with $S^*_{\theta} = \prod_{j=1}^{\tau} S_{\theta}^{(j)}$. Thus, a Bayesian might lead the first study and insist on a Savage-Dickey e-collection; the frequentist might lead the second study and use a generalized Savage-Dickey collection --- the product of their e-variables would still be meaningful and could be used to get valid risk assessments (\ref{eq:riskassessment}) --- a flexibility which is hard to achieve with existing frequentist and objective Bayes methods.

Another open question is to determine the precise implications of validity as defined by Proposition~\ref{prop:TheBound}. We only showed the asymptotic (\ref{eq:london}) but clearly there are small-sample implications as well, i.e, `with probability at least $1-\delta$, at sample size $n$, we have $\ldots$' --- can we formulate useful ones? Note in particular that the asymptotic (\ref{eq:london}) was based on a very loose inequality (stated above (\ref{eq:london}), suggesting that stricter interpretations are possible. 

Finally, once one considers risk assessments in terms of the capped e-posterior, there appears to be a connection to the Martin-Liu {\em inferential models\/} \citep{martin2015inferential,Martin21} which address some issues with credible intervals based on objective Bayes posteriors. We intend to investigate this connection in future work. 
}
constant or varies only very little over $y$,  then the bound (\ref{eq:TheBoundA}) looks like a standard expected risk bound. If $\bar{R}(\delta)$ can vary highly depending on the data, then the bounds (\ref{eq:TheBoundA}) and (\ref{eq:TheBoundB}) are still valid, but the supremum in (\ref{eq:TheBoundB}) may make them rather weak. Can we get better bounds in such cases? And what if we let $B$ also vary with $\theta$ and $a$? Then, upon learning $\{B_{\theta,a}: \theta \in \Theta, a \in \cA \}$, DM might want to re-consider the originally planned decision rule $\delta$, significantly complicating the analysis. Related to this is the question what loss functions $L$ correspond most tightly to real-world problems at all. In our hypothesis testing examples, we considered asymmetric loss functions which seems reasonable, but in our estimation example, we used the squared error loss --- which is mathematically convenient and incentives one to output the posterior mean; but in practical situations one may be  more interested in very different, hard-to-formalize, problem-dependent losses. All in all, there is plenty of room for future work here! 
\bibliography{master,peter}
\pagebreak
\appendix
\section*{Supplementary Material}
\label{app:proofs}
In this Supplementary Material, in Section~\ref{app:normalproofs} we provide details and proofs concerning the two-point prior-based generalized Savage-Dickey e-variables for the Gaussian location family and its extension to one-dimensional exponential families. In the final Section~\ref{app:logn} we provide details and proofs concerning the 'pure' Savage-Dickey e-variables for Gaussian location and one-dimensional exponential families. In between in Section~\ref{app:mleminimax} we show how the MLE is e-posterior minimax both for Gaussian location with two-point prior and for general exponential families with pure Savage-Dickey.
\commentout{
\section{Generating the $B$'s in Section~\ref{sec:gaussexample}}
\label{app:generating}
Fix some $\theta^*$ and set 
$B(y) = \exp((y - \theta^*)^2/2) g_{\theta^*}(y)$ where $g_{\theta^*}$ is some probability density that is symmetric around $\theta^*$. 
Following the example in Section~\ref{sec:gaussexample}, but now with $B$ instantiated to the above, and with $Y= X_1$, i.e.\  sample size $n=1$,  the loss one {\em thinks\/} to expect based on $w^{\circ}(\theta \mid Y)$, with true parameter $\theta^*$, is given, using that $\hat\theta = Y$ is the mean of $w^{\circ}(\theta \mid Y)$, which is a normal density with variance $1$ (since $n=1$),  by
\begin{align}\label{eq:finite}
& {\bf E}_{Y \sim P_{\theta^*}}[{\bf E}_{\bar\theta \sim W^{\circ}\mid Y} 
[L_{Y}(\bar\theta,\hat\theta(Y))]]= {\bf E}_{Y \sim P_{\theta^*}}
\left[  e^{(Y-\theta^*)^2/2} g_{\theta*}(Y) {\bf E}_{\bar{\theta} \sim W \mid Y}(\hat\theta(Y)- \bar{\theta})^2
\right] =
\nonumber \\ & {\bf E}_{Y \sim P_{\theta^*}}
\left[  e^{(Y-\theta^*)^2/2} g_{\theta*}(Y)\right] = \frac{1}{\sqrt{ 2\pi}} \int g_{\theta^*}(y) dy = \frac{1}{\sqrt{2 \pi}},
\end{align}
whereas the loss one should {\em actually\/}  expect is 
\begin{equation}\label{eq:infinite}
{\bf E}_{Y \sim P_{\theta^*}}[L_{\currz}(\theta^*,Y)]= 
{\bf E}_{Y \sim P_{\theta^*}}\left[e^{(Y-\theta^*)^2/2} g_{\theta^*}(Y)(\theta^*-Y)^2 \cdot 1\right]
= \frac{1}{\sqrt{2 \pi}} \int g_{\theta^*}(y)(\theta^* - y)^2 d y.
\end{equation}
It is now easy to pick $g_{\theta^*}(y)$ such that the first expression is finite whereas the second is infinite. 
For example, suppose we take $g_{\theta^*}$ to be the distribution of $X-\theta^*$, where $X$ has a Student's t-distribution with 2 degrees of freedom. Then $g^*(y)  \asymp y^{-3} $ so (\ref{eq:infinite}) will be infinite yet (\ref{eq:finite}) is finite. The $B_i$'s shown in the main text are based on this $g^*$.
}
\section{Two-point prior-based E-Collections}\label{app:normalproofs}
\subsection{A Theorem for One Dimensional Exponential Families}
Here we present Theorem~\ref{thm:exp}, which extends the result (\ref{eq:superbound}) for two-point prior-based e-collections of Example~\ref{ex:normalcontinued} to 1-dimensional exponential families. 
More precisely, we let $\{P_{\mean}: \mean \in \Theta\}$ be any 
regular \citep{BarndorffNielsen78} 1-dimensional exponential family 
given in any diffeomorphic parameterization (for concreteness, take the mean-value one)  and extended to $n$ outcomes by independence. We write the KL divergence between two members of the family defined on a single outcome as  $D(\mean'\| \mean)$.  We denote by $\hat\theta(y)$ the MLE based on data $y = x^{n}$, which is known to be  unique and equal to the empirical average $n^{-1} \sum_{i=1}^n \phi(X_i)$, with $\phi$ the sufficient statistic, whenever this average lies in $\Theta$, which for regular families is an open set.
Suppose we observe $Y= x^n$ with $\hat\theta(Y) \in \Theta$. 
We now use as our loss function 
$$
L_{\currz}(\theta,\breve\theta) := 2 \cdot  D(\breve\theta \| \theta),$$ 
which is parameterization independent; but with the mean-value parameterization, in the Gaussian location family, $2 D(\breve\theta \| \theta) = (\theta - \breve\theta)^2$ becomes the squared error loss as in Example~\ref{ex:gaussexample}; of course the factor $2$ is merely for mathematical convenience in the proofs. 

As we show below (\ref{eq:expfamproperty}), for every $C> 0$, every $\theta \in \Theta$,
there exist $\meanmin < \mean < \meanplus$  such that, consistent with the Gaussian case,
\begin{equation}
    \label{eq:postval}
D(\mean\| \meanplus) = \frac{C}{n^*}\ , \  
 D(\mean \| \meanmin) =
\frac{C}{n^*}. 
\end{equation}
As in the Gaussian example, we take $C=1$, and we construct, for each $n$, 
the e-variables $S^{[n]}_{\mean}(y) =  \frac{(1/2) p_{\meanmin}(y)+ (1/2) p_{\meanplus}(y)}{p_{\mean}(y)}$, obtaining again a generalized Savage-Dickey e-process. 
Based on the standard Taylor expansion 
\begin{equation}
    \label{eq:taylor} D(\theta^* \| \theta)
= \frac{1}{2} I(\theta^{\circ}) (\theta^* - \theta)^2 
\end{equation}
where $I(\cdot)$ is the Fisher information, which must hold exactly for some $\theta^{\circ}$ in between $\theta^*$ and $\theta$, we may expect risk assessments based on this e-variable to behave similarly as in the Gaussian case. Indeed, below we first prove the result (\ref{eq:superbound}) of the main text for the Gaussian case, after which we proceed to prove, heavily using (\ref{eq:taylor}), the following extension to regular 1-dimensional exponential families: 
\begin{theorem}\label{thm:exp}
Let $\Theta^{\circ}$ be an arbitrary compact subinterval of $\Theta$. Consider restricted model $\{P_{\theta} : \theta \in \Theta^{\circ} \}$ and the collection of e-processes $\{S_{\theta}: \theta \in \Theta^{\circ} \}$.
Then, for every $0 < c \leq 1$, there is a constant $C^*$ such that whenever $\hat\theta \in \Theta$, and $c \leq n^*/n \leq 1/c$, (\ref{eq:superbound}) still holds up to a vanishing $1 \pm O(n^{-1/2})$ factor. That is, there is $C^*$ such that, for all $n$, uniformly for all $x^n$ with $\hat\theta(x^n) \in \Theta^{\circ}$,
\begin{equation}
    \label{eq:superboundb}
    \max_{\theta \in \Theta^{\circ}} L_{\currz}(\theta,\hat\theta(x^n)) \cdot \bar{P}(\theta \mid x^n) = \frac{1}{n} \cdot \left(
    \frac{n^*}{n} \right) \cdot \exp^{n/n^*} \cdot f_n \cdot 0.53\ldots 
    \overset{\text{\rm if \ } n^*=n\ }{=} \frac{1}{n} \cdot f_n \cdot 1.45\ldots
\end{equation}
where $1 - \sqrt{C^*/n} \leq f_n \leq  1 +\sqrt{C^*/n}$.
\end{theorem}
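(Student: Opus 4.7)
The plan is to reduce the maximization to a single scalar optimization, handle the Gaussian case in closed form, and pull the general exponential-family result back to it via a local Taylor expansion that is uniformly controlled on $\Theta^\circ$.

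Using the standard exponential-family identity $p_\theta(x^n) = p_{\hat\theta}(x^n) \cdot \exp(-n D(\hat\theta \| \theta))$, the $p_{\hat\theta}(x^n)$ factors cancel between the numerator and the denominator of $L_{\currz}(\theta,\hat\theta) \cdot \bar{P}(\theta \mid x^n)$, so the quantity to be maximized equals
\begin{equation*}
G_n(\theta) \;:=\; \frac{4\, D(\hat\theta \| \theta)\, \exp(-n D(\hat\theta \| \theta))}{\exp(-n D(\hat\theta \| \theta^-)) + \exp(-n D(\hat\theta \| \theta^+))},
\end{equation*}
where $\theta^\pm$ are determined by $\theta$ via (\ref{eq:postval}). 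For the Gaussian case (where (\ref{eq:superbound}) is to be established first), setting $u := \theta - \hat\theta$ gives $D(\hat\theta \| \theta) = u^2/2$ and $\theta^\pm = \theta \pm \sqrt{2/n^*}$; the two exponentials in the denominator share the factor $\exp(-nu^2/2 - n/n^*)$, and their sum telescopes to $2 e^{-nu^2/2 - n/n^*} \cosh(nu\sqrt{2/n^*})$, yielding $G_n(\theta) = u^2 e^{n/n^*}/\cosh(nu\sqrt{2/n^*})$. The rescaling $v := nu\sqrt{2/n^*}$ then gives $\max_\theta G_n(\theta) = (n^*/(2n^2))\, e^{n/n^*} \cdot \max_v v^2/\cosh v$, and the numerical maximum of $v^2/\cosh v$, attained at the unique positive root of $v \tanh v = 2$, is $\approx 1.07$, producing the stated constants $0.53\ldots$ in general and $1.45\ldots$ when $n^* = n$.

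For the general exponential family on compact $\Theta^\circ$, I would then argue in two steps. First, for $n$ sufficiently large all relevant $\theta^\pm$ lie in a slightly enlarged compact $\Theta^{\circ\circ} \subset \Theta$, and the maximum of $G_n$ over $\theta \in \Theta^\circ$ is attained in a window $|\theta - \hat\theta| \leq M n^{-1/2}$: by continuity and strict positivity of $I$ one has a uniform lower bound $D(\hat\theta \| \theta) \geq c_0(\theta - \hat\theta)^2$ on $\Theta^{\circ\circ}$, and outside the window the factor $\exp(-n D(\hat\theta\|\theta))$ decays exponentially in $M^2$ while $D(\hat\theta\|\theta)$ grows only quadratically, so that a sufficiently large $M$ (chosen uniformly in $\hat\theta$) places the maximizer inside the window. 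Second, inside the window, (\ref{eq:taylor}) together with uniform bounds on $I$ and $I'$ on $\Theta^{\circ\circ}$ give
\begin{equation*}
D(\hat\theta \| \theta) = \tfrac{1}{2} I(\hat\theta)(\theta - \hat\theta)^2 + O(|\theta - \hat\theta|^3),
\qquad
\theta^\pm - \theta = \pm \sqrt{2/(n^* I(\theta))} + O(1/n^*),
\end{equation*}
both uniformly in $\hat\theta \in \Theta^\circ$. Plugging in $|\theta - \hat\theta| \leq Mn^{-1/2}$, each factor $nD$ appearing in an exponent matches its Gaussian analogue with variance $\sigma^2 := 1/I(\hat\theta)$ up to an additive $O(n^{-1/2})$ error, and the rescaling $v := n(\theta - \hat\theta)\sqrt{2 I(\hat\theta)/n^*}$ reduces the problem to the Gaussian optimization of the previous paragraph, with the $\sigma^2$-dependence cancelling exactly as before. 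The overall multiplicative error is $1 + O(n^{-1/2})$, which is the $f_n$ in (\ref{eq:superboundb}).

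The main obstacle is the uniform control of these Taylor remainders: $\theta^\pm$ depends on $\theta$ via the implicit equation $D(\theta\|\theta^\pm)=1/n^*$, $\theta$ itself varies in an $n^{-1/2}$-window, and several intertwined expansions must be bounded simultaneously with uniform error $O(n^{-1/2})$. This is ultimately feasible because a regular exponential family is real-analytic in its natural parameter, so $I$ and $I'$ are continuous and bounded on $\Theta^{\circ\circ}$ with $I$ strictly positive there; but the bookkeeping is delicate and is where the compactness of $\Theta^\circ$ is essential. Finitely many small $n$ (for which the window does not yet fit inside $\Theta^{\circ\circ}$) can be absorbed into $C^*$ using compactness and continuity of the left-hand side of (\ref{eq:superboundb}) in $\hat\theta$.
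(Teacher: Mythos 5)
Your overall route is the same as the paper's: an exact reduction of the Gaussian case to maximizing $v^2/\cosh v$ (critical equation $v\tanh v=2$, $v^*\approx 2.065$, maximum $\approx 1.064$, giving the constants $0.53\ldots$ and, at $n=n^*$, $1.45\ldots$), followed, for a general regular family on a compact $\Theta^{\circ}$, by localizing the maximizer to an $O(n^{-1/2})$-window around $\hat\theta$ and matching the Gaussian computation there via uniformly controlled Taylor expansions of $D$ and of the defining equation for $\theta^{\pm}$, with multiplicative error $1+O(n^{-1/2})$. This is exactly the paper's two-step structure: its sets $\Theta'_{\hat\theta}$ and the bound (\ref{eq:spekkie}) play the role of your window and your uniform control of $I$ and $I'$, and the absorption of small $n$ into $C^*$ is also how the paper finishes.

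The one genuine gap is in your localization step. You justify the window by saying that outside it ``the factor $\exp(-nD(\hat\theta\|\theta))$ decays exponentially in $M^2$ while $D(\hat\theta\|\theta)$ grows only quadratically.'' That controls only the numerator of your $G_n$; the denominator $\exp(-nD(\hat\theta\|\theta^-))+\exp(-nD(\hat\theta\|\theta^+))$ also decays exponentially as $\theta$ moves away from $\hat\theta$, because $\theta^{\pm}$ track $\theta$ at KL-distance $1/n^*$, so smallness of the numerator alone does not make the ratio small. What is actually true, and what the paper proves, is that the \emph{ratio} satisfies $\bar{P}(\theta\mid y)\leq 2\exp\bigl(-n[D(\hat\theta\|\theta)-D(\hat\theta\|\theta^{\mp})]\bigr)$, which decays like $\exp(-c\,n|\theta-\hat\theta|/\sqrt{n^*})$ --- exponentially in $M$, not in $M^2$. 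The paper gets this from the observation that $\theta^*\mapsto D(\theta^*\|\theta)-D(\theta^*\|\theta^-)$ is affine in $\theta^*$ with slope $\beta(\theta^-)-\beta(\theta)$ (canonical parameters), bounded away from $0$ uniformly on $\Theta^{\circ}$ by compactness. Your conclusion survives with the corrected rate, since $M^2e^{-cM}\to 0$ still beats the $1.44/n$ attained inside the window, so the gap is fixable; but as written the step does not go through, and the affine-in-the-canonical-parameter comparison (or an equivalent bound on the ratio rather than the numerator) is the missing ingredient.
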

Thus, even though now $\hat\theta$ may not be precisely equal to the e-posterior minimax estimator $\deltamm$ any more, we may use it as our decision rule anyway, and then still get, for each compact sub-model  $\Theta^{\circ}$, risk bounds that, at least for sufficiently large $n$, are of the same order as standard minimax frequentist risk bounds. 
\subsection{Proofs for Example~\ref{ex:normalcontinued},~\ref{ex:normalzero} in main text, and Theorem~\ref{thm:exp} above}
Below we prove all results that were stated in  the main text for the Gaussian location family and  1-dimensional  exponential families. 
We will freely, without further reference, use standard terminology and results concerning exponential families (all to be found in \citep{BarndorffNielsen78}), but we first do highlight two slightly nonstandard results that we will use repeatedly. The first (easily proved using steepness of regular exponential families) is that for each fixed $\mean' \in \Theta$, $D(\theta' \| \theta)$ is a continuous function of $\mean \in \Theta$ satisfying
\begin{equation}
\label{eq:expfamproperty}
\sup_{\mean< \mean'} D({\mean'} \| {\mean})= \sup_{\mean> \mean'} D({\mean'} \| {\mean}) = \infty.
\end{equation}
The second result we need is the {\em KL robustness property\/} \citep{Grunwald07} that holds for all regular exponential families: for any fixed $y = x^n$  such that $\hat\theta(y)$ is well-defined,  any $\theta \in \Theta$ and any prior $W$ on $\Theta$ ($W$ does not need to have a density), we have: 
\begin{equation}\label{eq:robustness}
\frac{p_{\theta}(y)}{p_W(y)}
= \exp(-n D(\hat\theta \| \theta) + D(P_{\hat\theta}^{(n)} \| P_W^{(n)})),
\end{equation}
where $P_W$ is the Bayes mixture distribution as defined in Example~\ref{ex:simplesd}, and, for any distribution $P$ underlying random process $X_1,X_2, \ldots$,  we denote the marginal distribution of the first $n$ outcomes as $P^{(n)}$.
\paragraph{Proof of (\ref{eq:superbound}) in Example~\ref{ex:normalcontinued} and Claim in Example~\ref{ex:normalzero}} 
Fix $C> 0$. By (\ref{eq:expfamproperty}) above, for general regular 1-dimensional exponential families,
there exist $\meanmin < \mean < \meanplus$  such that (\ref{eq:postval}) holds.

Now define, for $i = 1,2, \ldots$, $x^i \in \reals^i$, the e-processes $\Sl_{\mean}(x^i) = \frac{p_{\meanmin}(x^i)}{p_{\mean}(x^i)}$ and $\Sr_{\mean}(x^i)= \frac{p_{\meanplus}(x^i)}{p_{\mean}(x^i)}$
and $S_{\mean} := (1/2) \Sl_{\mean} + (1/2) \Sr_{\mean}$. 
While these e-processes provide valid e-variables at each sample size $i$ we have that when evaluated at sample size $n$ (on which their definition depends),
$\Sl_{\mean}$ 
coincides with the  {\em uniformly most powerful Bayes factor\/} for a 1-sided test at sample size $n$ and level $\alpha^*/2$, for any $n$ and $\alpha^*$ such that $C^{-}= - \log (\alpha^*/2)$, for $\cH_0 = \{P_{\mean}\}$ vs.\ $\cH_1 = \{P_{\mean'}: \mean > \mean\}$; similarly for $\Slr_{\mean}$ with $C^+$ and $\cH_1 = \{P_{\mean'}: \mean' < \mean\}$ respectively  \citep{johnson2013uniformly}. 

We first focus on the Gaussian location family for which  $D(\mean'\| \mean ) = (1/2) (\mean'- \mean)^2$ . 
$\bar{P}(\theta \mid y)$   can be written by  (\ref{eq:robustness})  with $W$ a prior putting mass $1/2$ on $\meanplus$ and $1/2$ on $\meanmin$. For the Gaussian location family, (\ref{eq:robustness}) then gives:  
\begin{align}\label{eq:gaussrobustness}
\bar{P}(\theta \mid y)  & = 
 \frac{p_{\mean}(y)}{\frac{1}{2}p_{\meanmin(y)} + \frac{1}{2}p_{\meanplus(y)} }  = 
 \frac{
2 e^{-n D(\hat\mean \| \mean)}}{e^{-n D(\hat\mean \| \meanmin)}+ e^{-n D(\hat\mean \| \meanplus)}}  \nonumber \\
& =   \frac{
2 e^{- (n/2) (\hat\theta - \theta)^2}}{e^{-(n/2)\cdot (\hat\mean - \mean + \sqrt{2C/n})^2}+ e^{-(n/2) (\hat\mean - \mean - \sqrt{2C/n})^2}}  
 = \nonumber \\
 & =
\frac{2 e^{C\cdot (n/n^*)}
}{e^{-\sqrt{n} \cdot (\hat\mean - \mean)\sqrt{2 C(n/n^*)}  }+ e^{\sqrt{n} \cdot  (\hat\mean - \mean) \sqrt{2  C (n/n^*)} }}  
\end{align}
We will assume that, for any observed $y$, the MLE $\hat\theta = \hat\theta(y)$ is chosen as action (we will derive that this is the e-posterior minimax action later on, in Proposition~\ref{prop:KLahoy}). Thus, to calculate the risk assessment bound (\ref{eq:riskassessment}) we need to find the maximum over $\theta$ of 
\begin{equation}
    \label{eq:maxima}
(\hat\theta-\theta)^2 \bar{P}(\theta \mid y)
\end{equation} 
We find, using (\ref{eq:gaussrobustness}), and the fact that 
the maximum (\ref{eq:maxima}) must be  achieved at the same $\theta$ as the maximum of $2 C n (n/n^*) (\hat\theta-\theta)^2 \bar{P}(\theta \mid y)$, that the maximizing $\theta_n$ satisfies 
\begin{equation}
    \label{eq:cosh}
\sqrt{2 C n \cdot \frac{n}{n^*} (\theta_n- \hat\theta)^2} = x^* \text{\  where \ } x^*
\text{\ maximizes\ }
\frac{x^2}{\exp(x) + \exp(-x)},
\end{equation}
a function with a unique maximum, found numerically to be achieved at $x^*= \pm 2.065338138969\ldots$. (\ref{eq:maxima}) then becomes,
\begin{equation}
    \label{eq:minima}
\frac{1}{n} \cdot \frac{(x^*)^2}{2 C (n/n^*)} \cdot \frac{2 e^{C (n/n^*)}}{e^{-x^*} + e^{x^*}}
\end{equation}
We want to find the $C$ giving the best possible bounds for the choice $n=n^*$, i.e.\ when the anticipated sample size turns out correct.  
This can be found by minimizing (\ref{eq:minima}) for $n=n^*$: by differentiation we find $C=1$, corresponding to the choice in Example~\ref{ex:normalcontinued}. 
We then find (still at $n=n^*$) that $\theta_n = \hat\theta \pm x^*/\sqrt{ 2n} \approx \hat\theta \pm 1.46/\sqrt{n}$.
Similarly, $\bar{P}(\theta \mid y)$ for the maximizing $\theta_n$ at $n=n^*$ is given by $2 e^{1} /(e^{x^*} + e^{-{x^*}}) = 0.6783206434\ldots$. 
The desired maximum (\ref{eq:maxima}), at general $n$,  is then given, using (\ref{eq:minima}) with $C=1$, by $(1/n)  (n^*/n) e^{n/n^*} x^{*2}/(e^{x^*} + e^{-{x^*}}) =  (1/n)  (n^*/n) e^{n/n^*} \cdot 0.53222078$, which, if $n= n^*$, evaluates to  
$(1/n) \cdot 1.446729604 \ldots $ and 
(\ref{eq:superbound}) follows.

\paragraph{Proof of Theorem~\ref{thm:exp} above and generalized Claim in Example~\ref{ex:normalzero}}
For simplicity we only consider the case with  $n=n^*$ here; the extension to $c \leq n/n^* \leq 1/c$ can then be derived analogously to the Gaussian case. We now let $\Theta$ represent the mean-value parameterization.
As in the main text, we set $\theta^+$ and $\theta^-$ as in (\ref{eq:postval}) above with $C= 1$, which defines $\bar{P}(\theta \mid y)$.
We fix an arbitrary  compact subinterval $\Theta^{\circ}$ of $\Theta$.
We need to bound, for arbitrary fixed $\hat\theta \in \Theta^{\circ}$ the maximum over $\theta \in \Theta^{\circ}$ of
\begin{equation}
    \label{eq:exprisk}
    2 \cdot D(\hat\theta \mid \theta) \bar{P}(\theta \mid y).
\end{equation}
We let $\beta(\theta)$ denote the canonical parameter corresponding to $\theta$. Note that, for $g(\theta^*) = D(\theta^* \| \theta) - D(\theta^* \| \theta^{-})$, we have 
$g'(\theta^*) = \beta(\theta^-) - \beta(\theta) < 0$ constant in $\theta^*$ (here we used that $\beta$ is monotonically increasing in $\theta$). In particular, if $\hat\theta$ and $\theta$ are both contained in $\Theta^{\circ}$, then $g'(\hat\theta) \leq - C^-$ where $C^- = \min _{\theta \in\Theta^{\circ}} \beta(\theta) - \beta(\theta^-) > 0$, and then, 
for $\hat\theta = \theta - a$, $a > 0$:
\begin{align*}
    \bar{P}(\theta \mid y) & \leq 2
 e^{-n (D(\hat\theta \| \theta) - D(\hat\theta \| (\theta)^{-}))}  =  2 e^{-n (D(\theta -a \|\theta) - D(\theta -a \| \theta^-) } \\
 & \leq 2 e^{n D(\theta \|\theta^-) - a n C^* } = 2 e^{C - a n C^-}.
\end{align*}
Analogously, if $\hat\theta,\theta \in \Theta^{\circ}$, $\hat\theta = \theta +a$, $a > 0$, we can derive
$
 \bar{P}(\theta \mid y)  \leq 2  e^{C - a n C^+}
$
for $C^+ = \min_{\theta \in\Theta^{\circ}} \beta(\theta^+) - \beta(\theta) > 0$. It follows that with $C^{\pm} = \min \{C^-, C^+\}$, there exists another constant $C'$ such that, if $\theta,\hat\theta \in \Theta^{\circ}$,
then the risk assessment at $\theta$ satisfies
\begin{equation}\label{eq:ketel}
2\cdot D(\hat\theta \| \theta) \bar{P}(\theta \mid y) \leq b C' a^2 e^{C- |a| n C^{\pm}}.\end{equation}
Writing $|a| = \sqrt{K/n}$, we see that for large enough $K$, the right-hand side is smaller than $1/n$. We thus find that there is a constant $K$ such that, for all $\theta,\hat\theta \in \Theta^\circ$, if $|\hat\theta - \theta|  \geq \sqrt{K/n}$, then the risk bound (\ref{eq:ketel}), and hence (\ref{eq:exprisk}), is smaller than $1/n$.
Without loss of generality we may assume $K \geq 1.46^2$ (the need for this condition will become clear later).

We now  fix $K'> 0$ and set, for $\alpha \in [-K',K']$, for $\hat\theta \in \Theta^{\circ}$,
\begin{equation}
    \label{eq:sproet}
\hat\theta_{\alpha} := \hat\theta + \alpha \cdot \sqrt{\frac{2}{n I(\hat\theta)}}. 
\end{equation}
where $K'$ is chosen such that $\hat\theta_{-K'}\leq \hat\theta - \sqrt{K/n}$ and $\hat\theta_{K'} \geq \hat\theta + \sqrt{K/n}$ for all $\hat\theta \in \Theta^{\circ}$.
Since $I(\theta) $ is bounded from below on $\Theta^{\circ}$ by a positive constant, such a $K'$ must exist (note we do not require that  $\theta_{\alpha} \in \Theta^{\circ}$ for all $\alpha \in [-K', K]$). 
We now define $\Theta'_{\hat\theta} :=  \Theta^{\circ} \cap \{\hat\theta_{\alpha}: \alpha \in [-K',K]\}$.
Our proof strategy for the remainder of the proof is showing that $\Theta'_{\hat\theta}$ is an interval around $\hat\theta$ that is small enough so that we can  re-derive the results for the Gaussian case (up to a factor converging to $1$) on this set, and then showing that the global maximum of (\ref{eq:exprisk}) over $\Theta^{\circ}$ is achieved on this set, so that only this set matters. 

We have that $|I'(\theta)/I(\theta)|$, where $I'$ is the derivative if $I$, is bounded by a constant on $\Theta^{\circ}$. A first-order Taylor approximation of $I(\theta)$ then 
 implies that there exist a constant $C^{\circ}$ such that for all large enough $n$ larger than some $n^{\circ}$, for all $\hat\theta  \in \Theta^{\circ}$, we have
\begin{equation}
    \label{eq:spekkie}
\text{for all $\theta,\theta^{\circ} \in \Theta'_{\hat\theta}$}\ : 
1 -  \sqrt{\frac{C^{\circ}}{n}} \leq  \frac{I(\theta)}{I(\theta^{\circ})} \leq 1 + \sqrt{\frac{C^{\circ}}{n}}.
\end{equation}
We consider, at sample size $n >n^{\circ}$ the risk assessment (\ref{eq:riskassessment}), i.e.\ the maximum over $\theta \in \Theta^{\circ}$ of  (\ref{eq:exprisk}) that we get for any fixed $\hat\theta(y) \in \Theta^{\circ}$. We first consider the maximum over $\theta \in \Theta'_{\hat\theta}$.
%

Within $\Theta'_{\hat\theta}$, we can  perform essentially the same derivation as in (\ref{eq:gaussrobustness}), using the Taylor approximation (\ref{eq:taylor}) in the second step in both numerator and denominator and aggressively and repeatedly using (\ref{eq:spekkie}) above to vary $\theta$ in $I(\theta)$. 
This (tediously but straightforwardly) gives, for $\alpha \in [-K',K']$ such that $\hat\theta_{\alpha} \in\Theta'$,  
\begin{align}
& \bar{P}(\hat\theta_{\alpha} \mid y)  = 2 e^{1}  e^{O(n^{-1/2})} \cdot \frac{1}{e^{-2 \alpha } + e^{ 2\alpha }}  
 = 2 e^{1} e^{O(n^{-1/2})} \cdot 
\frac{1}{e^{- \sqrt{ (2  I(\hat\theta) n }(\theta- \hat\theta) }  + e^{+ \sqrt{( 2  I(\hat\theta) n }(\theta - \hat\theta)} } \nonumber \\ 
& D(\hat\theta \| \hat\theta_{\alpha} ) =  (1 + O(n^{-1/2})) (\hat\theta - \hat\theta_{\alpha})^2 I(\hat\theta) \nonumber
\end{align}
which holds uniformly for all $\hat\theta \in \Theta^{\circ}$ and all $\alpha \in [-K',K']$ such that  $\hat\theta_{\alpha} \in {\Theta}'_{\hat\theta}$, i.e.\ we can pick the same  constant in  $O(n^{-1/2})$ for all such $\hat\theta$ and $\alpha$. 
But this means that for all large enough $n$, on the set $\Theta'$,   we can repeat the same reasoning as in (\ref{eq:cosh})  in the Gaussian case up to a factor of order $1+ O(n^{-1/2})$: since this factor converges to $0$, we must have that, uniformly for all $\hat\theta \in \Theta$, for all $n$ larger than some $n_0$, the maximum will be above $(1/n) \cdot  1.44$ and will be achieved by one or both elements inside a set $\Theta_{\max} = \{ \hat\theta + \gamma^+_n/\sqrt{n}, \hat\theta - \gamma^-_n/\sqrt{n} \} \cap \Theta'$, where both $\gamma^+_n$ and $\gamma^-_n$ converge to $1.46 \ldots$, at which point $\bar{P}(\theta_n \mid y) \approx 0.68$. 
Note that by requiring $K \geq 1.46^2$ we made sure  $K'$ is large enough so that both choices for the maximum are in $\{\hat\theta_{\alpha}: \alpha \in [-K',K]\}$; at least one of these two choices for $\theta_n$ then also lies in $\Theta'$ for $n$ larger than some $n_0$, where $n_0$ can be chosen uniformly, independently of $\hat\theta$. 

Thus, the risk (\ref{eq:exprisk}) maximized over $\theta \in \Theta'_{\hat\theta} \subset \Theta^{\circ}$ is bounded from below by $1.44 /n$, and the maximum over $\Theta^{\circ}$ must be at least as large. But (\ref{eq:ketel}) now gives that the maximum of (\ref{eq:exprisk}) over $\theta \in \Theta^{\circ} \setminus \Theta'_{\hat\theta}$ is at most $1/n$ which is smaller than the maximum risk within $\Theta'_{\hat\theta}$. Therefore the global maximum is achieved within $\Theta'_{\hat\theta}$ and, just as for the Gaussian location case, is given by $(1/n) \cdot 1.45\ldots$,
which is what we had to prove (the extension to $n^* \neq n$ mentioned in (\ref{eq:superboundb}) is now trivial to add; the various `for all large enough $n$' qualifications we imposed are absorbed into $C^*$)). As a by-product, we also find that, since $\bar{P}(\theta_n \mid y) \ll 1$, Condition Zero holds automatically in this setting. 
\commentout{
\paragraph{Proof for Example~\ref{ex:normaldiscrete}}
From (\ref{eq:robustness}) applied with a prior that puts mass $1$ on $\meanmin$ as defined by (\ref{eq:al}), $W(\{\meanmin\}) = 1$, we have, for general regular exponential families, 
$\bar{P}_{[n^*,\alpha],R}(\mean \mid Y) \leq \alpha$ iff  
\begin{equation}\label{eq:KLreject}
 D(\hat\mean \| \mean) -  D(\hat\mean \| \meanmin) \geq \frac{- \log \alph}{n}
\end{equation}
In the normal location example, $D(\hat\mean \|\mean) = (1/2) (\hat\mean - \mean)^2$
and we can rewrite this as 
$$
- (\mean - \meanmin) \left(\hat\mean - \frac{1}{2}(\mean + \meanmin )\right) \geq \frac{- \log \alph}{n}.
$$
Let $c_{\alpha}$ be as in the main text and let us define
$V:= (- \log \alph)/n$. 
In the normal location family, we have by definition of $\meanmin$ that $(\meanmin- \mean)^2/2 = c_{\alpha} V$. Therefore we have
$\mean - \meanmin = \sqrt{ 2c_{\alpha} V}$ and also $(\mean + \meanmin)/2 = \mean - \sqrt{c_{\alpha} V/2}$ giving, after some manipulation, (\ref{eq:normalreject}).}
\section{Establishing that the MLE is e-posterior minimax}\label{app:mleminimax}
The proposition below shows, by setting $\theta' := \hat\theta$, that the MLE  $\hat\theta$ is e-posterior minimax both for the Gaussian location family with the two-point prior (Example~\ref{ex:normalcontinued}) and for general exponential families with a pure Savage-Dickey e-variable with arbitrary prior (Example~\ref{ex:logn}). 
\begin{proposition}\label{prop:KLahoy}
Consider a regular exponential family given in its mean-value parameter space as above. Let $\theta' \in \Theta$ and let $f: \reals^+_0 \rightarrow \reals^+_0$ be a continuous function such that $\lim_{x \rightarrow \infty}   x f(x) = 0$.
We have
$$
\min_a \max_{\mean \in \Theta} D(a \| \mean) \cdot f(D(\mean'\| \mean))
$$
is achieved by $a = \mean'$. In particular, 
\begin{enumerate}
    \item 
Suppose that $\hat\theta = \hat\theta(y) \in \Theta$ and consider the dampened  e-posterior $\bar{P}^{[\gamma]}(\theta \mid y)$  for any $0 <  \gamma\leq 1$ and relative to any prior $W$. It  can be written 
as $f(D(\hat\theta \| \mean))$ for a function $f$ of the required type.
\item For the normal location family, the e-posterior based on the two-point discrete prior as in (\ref{eq:gaussrobustness}) can also be written as $f(D(\hat\theta \| \mean))$ for a function $f$ of the required type.
\end{enumerate}
\end{proposition}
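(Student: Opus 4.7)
My plan is to first prove the minimax claim itself, and then verify that the two e-posteriors appearing in (1) and (2) actually satisfy the hypotheses on $f$.

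The pivotal reduction is that when $a=\theta'$, the objective becomes $\max_{\theta\in\Theta} D(\theta'\|\theta)\,f(D(\theta'\|\theta))$, which I would rewrite by substituting $x = D(\theta'\|\theta)$. Since the family is regular, the property (\ref{eq:expfamproperty}) tells us that $\theta\mapsto D(\theta'\|\theta)$ is continuous from $\Theta$ to $[0,\infty)$ and attains every value in $[0,\infty)$ on each of the two one-sided components $\{\theta<\theta'\}$ and $\{\theta>\theta'\}$. Hence this maximum equals $\sup_{x\ge 0} x\,f(x)$; this supremum is finite and attained at some $x^{\ast}\in[0,\infty)$ because $xf(x)$ is continuous, nonnegative, vanishes at $0$, and tends to $0$ at infinity by hypothesis.

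For the matching lower bound, I would take any $a\neq\theta'$; by symmetry assume $a>\theta'$. I would then choose $\theta<\theta'$ with $D(\theta'\|\theta)=x^{\ast}$, which exists by the surjectivity just noted on $\{\theta<\theta'\}$. The key monotonicity is that $\partial_{\mu_1} D(\mu_1\|\theta)=\beta(\mu_1)-\beta(\theta)$, where $\beta$ is the canonical parameter, which is strictly increasing in the mean-value parameter; therefore $D(\cdot\|\theta)$ is strictly increasing on $(\theta,\sup\Theta)$. Applied with $\theta<\theta'<a$ this yields $D(a\|\theta)\geq D(\theta'\|\theta)=x^{\ast}$, so
\[
D(a\|\theta)\,f(D(\theta'\|\theta)) \;\geq\; x^{\ast} f(x^{\ast}) \;=\; \max_{\theta^\dagger\in\Theta} D(\theta'\|\theta^\dagger)\,f(D(\theta'\|\theta^\dagger)),
\]
showing that $a=\theta'$ is indeed a minimizer. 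The case $a<\theta'$ is symmetric, choosing $\theta>\theta'$ instead.

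For the two corollaries it only remains to exhibit the claimed $f$ and check its two properties. In case (1), applying the KL-robustness identity (\ref{eq:robustness}) with prior $W$ gives $\bar P^{\sd}(\theta\mid y)=c\,e^{-n D(\hat\theta\|\theta)}$ for a constant $c=\exp D(P_{\hat\theta}^{(n)}\|P_W^{(n)})$ that is independent of $\theta$. Dampening yields $\bar P^{[\gamma]}(\theta\mid y)=h(\bar P^{\sd}(\theta\mid y))$ with $h(u)=u/((1-\gamma)u+\gamma)$, so setting $f(x):=h(c\,e^{-nx})$ works: $f$ is continuous and $xf(x)\sim x\,c\,e^{-nx}/\gamma\to 0$ as $x\to\infty$. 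In case (2), equation (\ref{eq:gaussrobustness}) already writes $\bar P(\theta\mid y)=2e^{C(n/n^{\ast})}/(2\cosh u)$ with $u=2\sqrt{C(n^2/n^{\ast})\,D(\hat\theta\|\theta)}$, a continuous function of $D(\hat\theta\|\theta)$ whose product with $x$ vanishes at infinity thanks to the $\cosh$.

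The one step requiring genuine care is the existence of a $\theta$ on the chosen side of $\theta'$ with $D(\theta'\|\theta)=x^{\ast}$: this is the only place where regularity of the exponential family is really used, via (\ref{eq:expfamproperty}) and continuity of $D(\theta'\|\cdot)$. If $x^{\ast}$ happened not to be attained (e.g., a degenerate $f$ for which only the limit $\sup_{x} xf(x)$ exists), I would replace $x^{\ast}$ by a sequence $x_k\uparrow\sup x f(x)$ and pass to the limit in the argument above, which changes nothing essential.
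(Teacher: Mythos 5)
Your proposal is correct and follows essentially the same route as the paper's proof: reduce the $a=\theta'$ case to maximizing $x f(x)$, then for $a\neq\theta'$ evaluate the inner maximum at the maximizer lying on the opposite side of $\theta'$ from $a$ and use monotonicity of $D(\cdot\|\theta)$ (which the paper asserts without the canonical-parameter computation you supply), with the same use of (\ref{eq:robustness}) and (\ref{eq:gaussrobustness}) for parts (1) and (2). The only cosmetic differences are that the paper establishes the strict inequality $g(a)>g(\theta')$, while your weak inequality already suffices for the claim as stated, and your caveat about a non-attained supremum is unnecessary since continuity plus the vanishing limits guarantee attainment.
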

\begin{proof}
Let $g(a) := \max_{\mean \in \Theta} D(a \| \mean) \cdot f(D(\mean'\| \mean))$. 
First consider $a=\theta'$. It follows from (\ref{eq:expfamproperty}) that  the maximum over $\mean$ in the definition of  $g(a) = g(\theta')$ is achieved by some $\mean^*_- < \theta'$ and also by some $\mean^*_+ > \theta'$.  
Now consider $a \neq \mean'$. We must show that $g(a) > g(\theta')$. 
If $a > \mean'$, we have that $D(a\| \mean^*_-)  >   D(\mean'\| \mean^*_-)$ so 
$$g(a) \geq D(a \| \mean^*_-) \cdot f(D(\mean'\| \mean^*_-))
> D(\mean'\| \mean^*_-) \cdot f(D(\mean'\| \mean^*_-)) = g(\theta'). 
$$ 
The case $ a< \mean'$ goes similarly, with $\mean^*_+$ replacing $\mean^*_-$.
This establishes the first result.

As to (1), the case with $\gamma=1 $ now follows directly from (\ref{eq:robustness}). For $\gamma < 1$, use the fact that $1/((1-\gamma) x^{-1} + \gamma)$ is increasing in $x$.

As to (2):
using (\ref{eq:gaussrobustness}), 
and once more that $D(\hat\theta \| \theta) = (1/2) (\hat\theta - \theta)^2$, and considering separately the cases that $\hat\theta > \theta$ and $\hat\theta < \theta$,  we find that
$
\bar{P}(\theta \mid y)   = f(D(\hat\theta \|\theta)),
$
where $f$ is of the required form. 
\end{proof}

\section{Pure Savage-Dickey based E-Collections for Gaussian and Exponential Families} 
\label{app:logn}
Let $\bar{P}^{[\gamma]}(\theta \mid y)$ be the `dampened e-posterior' as in the main text, based on arbitrary prior $W$ on $\Theta$.  
In Proposition~\ref{prop:KLahoy} above we showed that for any $0 <  \gamma \leq 1$ the MLE $\hat\theta$ is the $\bar{P}^{[\gamma]}$-e-posterior minimax estimator irrespective of $b$. Below we further, via Proposition~\ref{prop:dampening} below, show that, for the choice $\gamma = 1/2$, the risk assessment bound (\ref{eq:riskassessment}) holds with 
\begin{equation}\label{eq:boundyboundy}
\bar{R}(y,\hat\theta(y)) = \frac{2  }{n} D(P_{\hat\theta}^{(n)} \| P_W^{(n)}) 
\end{equation}
for all $n$ such that the expression on the right is larger than $1$ --- which will be the case for all but the smallest $n$. 
Here  we used the notation $D(P_{\theta}^{(n)} \| P_{W}^{(n)})$ for  the KL divergence between distribution $P_{\theta}$ and Bayes marginal $P_W$, both defined on $n$ outcomes. For the normal location family  with prior with mean $0$ and precision $\lambda$, we have the exact expression 
\begin{equation}
    \label{eq:houndy}
D(P_{\hat\theta}^{(n)} \| P_W^{(n)}) =  
\frac{1}{2} \log \frac{n + \lambda}{\lambda} + \frac{n \lambda}{n+\lambda} \hat\theta^2 
\end{equation}
which together with (\ref{eq:boundyboundy}) gives (\ref{eq:boundy}) in the main text. (\ref{eq:houndy}) is found by using the fact that $D(P_{\hat\theta}^{(n)} \| P_W^{(n)})= \ln \bar{P}_{W}(\hat\theta \mid y)$, an identity which follows from (\ref{eq:robustness})  and holds for general regular exponential families. If these have  continuous prior $w$, we get, for $\hat\theta$ in any compact subset of the parameter space, the expression
(Chapter 8 of \cite{Grunwald07})
$$
D(P_{\hat\theta}^{(n)} \| P_W^{(n)}) =  
\frac{1}{2} \log \frac{n}{2 \pi}  - \log \frac{w(\hat\theta)}{I(\hat\theta)^{1/2}} + o(1),
$$
with $I(\hat\theta)$ the Fisher information at $\hat\theta$, allowing us to generalize (\ref{eq:boundy}) to general exponential families.

We proceed to derive (\ref{eq:boundyboundy}). We start with a proposition that is applicable more generally than just for KL  loss: 
\begin{proposition}\label{prop:dampening}
Let  $\Theta \subset \reals$, $b \in \reals^+$ and let $L_{\currz}: \Theta \times \Theta \rightarrow \reals^+_0$ be a loss function  and let  $\breve\theta: \cY \rightarrow \Theta$ be an estimator.
Fix some $y \in \cY$ and let $\breve\theta := \breve\theta(y)$.  
Consider an e-posterior $\bar{P}(\theta \mid y)$ with $\bar{P}_{\sup} := \sup_{y \in \cY,\theta \in \Theta} \bar{P}(\theta \mid y)$.
and let $\bar{P}'$ be an upper bound on $\bar{P}$ up to a factor $C_{\sup}$ i.e.\ for all $\theta \in \Theta$, $y \in \cY$, $ \bar{P}(\theta \mid y) \leq C_{\sup} \bar{P}'(\theta \mid y)$. 
Fix some $\theta_L, \theta_R \in \Theta$ (depending on $\breve\theta$) with $\theta_L < \theta_R$ so that both: 
\begin{enumerate}
    \item 
for all $\theta \geq \theta_R$, $\bar{P}'(\theta \mid y) \leq 1$ and $\bar{P}'(\theta \mid y) L(\theta,\breve\theta)$ is decreasing in $\theta$.
 \item 
for all $\theta \leq \theta_L$, $\bar{P}'(\theta \mid y) \leq 1$ and $\bar{P}'(\theta \mid y) L(\theta,\breve\theta)$ is increasing in $\theta$. 
\end{enumerate} 
Then  our main risk assessment bound (\ref{eq:riskassessment}) holds for  $\bar{P}(\theta \mid y)$
with
\begin{equation}
    \label{eq:twice}
\bar{R}(y,\breve\theta(y)) =   b(y) \cdot \max\{ C_{\sup}, \bar{P}_{\sup} \} \cdot \max_{\theta \in [\theta_L,\theta_R]}  L (\theta,\breve\theta).
\end{equation}
\end{proposition}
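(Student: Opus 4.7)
\textbf{Proof plan for Proposition~\ref{prop:dampening}.} My plan is to apply Proposition~\ref{prop:TheBound} and then bound the supremum in the definition of $\bar{R}(y,\breve\theta(y))$ by splitting the parameter space into three pieces: the central interval $[\theta_L,\theta_R]$, the upper tail $(\theta_R,\infty) \cap \Theta$, and the lower tail $(-\infty,\theta_L) \cap \Theta$. On the central interval I use the trivial bound $\bar{P}(\theta\mid y) \leq \bar{P}_{\sup}$, on the tails I use the domination $\bar{P}(\theta\mid y) \leq C_{\sup} \bar{P}'(\theta\mid y)$ together with the monotonicity hypotheses on $\bar{P}' L$. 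Since $L_y(\theta,a) = b(y) L(\theta,a)$, the factor $b(y)$ will simply pull out.

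More precisely, first on the central region we immediately get
\begin{equation*}
\sup_{\theta \in [\theta_L,\theta_R]} \bar{P}(\theta\mid y)\, L_y(\theta,\breve\theta) \;\leq\; b(y)\,\bar{P}_{\sup}\cdot \max_{\theta \in [\theta_L,\theta_R]} L(\theta,\breve\theta).
\end{equation*}
On the upper tail, hypothesis~1 gives that $\theta \mapsto \bar{P}'(\theta\mid y) L(\theta,\breve\theta)$ is decreasing for $\theta \geq \theta_R$, so combined with $\bar{P}'(\theta_R\mid y) \leq 1$ we get
\begin{equation*}
\bar{P}'(\theta\mid y) L(\theta,\breve\theta) \;\leq\; \bar{P}'(\theta_R\mid y)\, L(\theta_R,\breve\theta) \;\leq\; L(\theta_R,\breve\theta) \;\leq\; \max_{\theta \in [\theta_L,\theta_R]} L(\theta,\breve\theta),
\end{equation*}
which, multiplied by $C_{\sup}\, b(y)$, gives the same bound with $\bar{P}_{\sup}$ replaced by $C_{\sup}$. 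The lower tail is handled symmetrically via hypothesis~2, pivoting at $\theta_L$ instead of $\theta_R$.

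Taking the maximum over the three regions yields the right-hand side of (\ref{eq:twice}) with the factor $\max\{C_{\sup},\bar{P}_{\sup}\}$, exactly as claimed. By Proposition~\ref{prop:TheBound} (which validates any pointwise upper bound on $\sup_\theta \bar{P}(\theta\mid y) L_y(\theta,a)$ as a legitimate $\bar{R}(y,a)$), the main risk assessment bound (\ref{eq:riskassessment}) holds with this choice.

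I do not anticipate any real obstacle here: the only things to be slightly careful about are (i) that the monotonicity hypotheses apply to $\bar{P}'$, not to $\bar{P}$ itself, so one really does need the factor $C_{\sup}$ in the tails, whereas the central region gives the factor $\bar{P}_{\sup}$; and (ii) that the $\bar{P}'(\theta_{L/R}\mid y)\leq 1$ assumption is what lets one replace the bound at the endpoint by a pure $L$-value with no multiplicative penalty. Taking the outer maximum $\max\{C_{\sup},\bar{P}_{\sup}\}$ merges the three cases into the single expression in (\ref{eq:twice}).
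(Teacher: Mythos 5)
Your proposal is correct and follows essentially the same route as the paper's own proof: the same three-way split of $\Theta$ into $[\theta_L,\theta_R]$ and the two tails, the same use of the monotonicity of $\bar{P}'L$ together with $\bar{P}'(\theta_{L/R}\mid y)\le 1$ to pull the tail suprema back to the endpoints, and the same $\bar{P}_{\sup}$ bound on the central interval. The only (harmless) difference is that you spell out the final appeal to Proposition~\ref{prop:TheBound}, which the paper leaves implicit in ``The result follows.''
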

\begin{proof}
For $\theta \leq \theta_L$, $\bar{P}(\theta \mid y) L_{\currz}(\theta,\breve\theta) \leq 
\bar{P}'(\theta \mid y) C_{\sup} L_{\currz}(\theta,\breve\theta) \leq C_{\sup} \bar{P}'(\theta_L \mid y) L_{\currz}(\theta_L,\breve\theta) \leq C_{\sup} L_{\currz}(\theta_L,\breve\theta)$.
Analogously for $\theta \geq \theta_R$, we have  $\bar{P}(\theta \mid y) L_{\currz}(\theta,\breve\theta) \leq  C_{\sup} L_{\currz}(\theta_R,\breve\theta)$.
Finally for $\theta \in [\theta_L,\theta_R]$, we have 
$\bar{P}(\theta \mid y) L_{\currz}(\theta,\breve\theta) \leq \bar{P}_{\sup} \cdot  \max_{\theta \in [\theta_L,\theta_R]} \cdot  L_{\currz} (\theta,\breve\theta)$.
The result follows.
\end{proof}
We now use Proposition~\ref{prop:dampening} to show the bound (\ref{eq:boundyboundy}). Assume the setting of that bound. 
We set $\breve\theta$  to the MLE and $L(\theta,\breve\theta) := D(\breve\theta \| \theta)$. We first apply Proposition~\ref{prop:dampening} with $\bar{P}'(\theta |y) := \bar{P}(\theta |y)$ (and $C_{\sup} =1 )$ set to the dampened e-posterior $\bar{P}^{[1/2]}$ relative to a prior $W$  (independent of $\theta$).  Because of the dampening with $\gamma = 1/2$, we know that $\bar{P}_{\sup} \leq 2$. 
We will apply the proposition with $\theta_L < \theta_R$ such that $\bar{P}^{[1/2]}(\theta_L \mid y) = \bar{P}^{[1/2]}(\theta_R \mid y)=1$. These must exist (use (\ref{eq:expfamproperty})), and by  (\ref{eq:robustness}) they satisfy
$$
D(\hat\theta \| \theta_L) = D(\hat\theta \| \theta_R) = \frac{D(P^{(n)}_{\hat\theta} \| P^{(n)}_W)}{n}.
$$
To verify the conditions of Proposition~\ref{prop:dampening}, we will show, using (\ref{eq:robustness}), that 
\begin{equation}\label{eq:lossbound}
D(\hat\theta  \| \theta)  \exp(-n D(\hat\theta \| \theta) + 
D(P^{(n)}_{\hat\theta} \| P^{(n)}_W)) 
\end{equation}
is increasing for $\theta < \theta_L$ and decreasing for $\theta > \theta_R$. 
For this, setting $C= D(P^{(n)}_{\hat\theta} \| P^{(n)}_W)$, it is sufficient to show that $g(u) := u \exp(-n u + C)$ is decreasing if $u \geq D(\hat\theta_L \| \theta_L)$, i.e.\ if $u \geq C/n$.
Differentiation gives that  $g(u)$ is decreasing if $u > 1/n$, so Proposition~\ref{prop:dampening} can be applied if $C\geq 1$ and then (\ref{eq:twice}) gives (\ref{eq:boundyboundy}). 

\commentout{We next apply Proposition~\ref{prop:dampening} to the dampened discrete e-posterior $\bar{P} := \bar{P}^{[1/2]}_{[n^*,\alpha^*]}$  with the Gaussian location family to show (\ref{eq:hyperbound}). Again, by the dampening, $\bar{P}_{\sup} \leq 2$. 
We will use Proposition~\ref{prop:dampening} with $\bar{P}'(\theta |y) = 2 \exp(n U^2/2 - n |\hat\theta - \theta| U)$ with $U$ as in (\ref{eq:gaussrobustness}) ; $\bar{P}'$ can be seen to be an upper bound of $\bar{P}_{[n^*,\alpha^*]}$ by (\ref{eq:gaussrobustness}) and hence, 
since trivially $\bar{P}^{[1/2]}_{[n^*,\alpha^*]} \leq 2 \bar{P}_{[n^*,\alpha^*]}$, we have $\bar{P}' \leq C_{\sup} \bar{P}^{[1/2]}_{[n^*,\alpha^*]}$ with $C_{\sup} = 2$. 
Using (\ref{eq:prereject}) and (\ref{eq:normalrejectb}) with $\alpha=1$, we find that with $\theta'_L =\hat\theta - V'$, $\theta'_R= \hat\theta+V'$ and $c = (n^*/n) \cdot ((\log 2)/(- \log (\alpha^*/2)))$ and 
$$V'= \sqrt{\frac{\log 2}{2 n}} \cdot \left( c^{1/2}+ c^{-1/2}  \right)$$ 
we are guaranteed that
$\bar{P}'(\theta \mid y) \leq 1$ for $\theta \leq \theta_L$ and $\theta \geq \theta_R$.
Further, simple differentiation shows that  $\bar{P}'(\theta \mid y) (\theta - \hat\theta)^2$ is increasing at $\theta < \theta''_L$ and decreasing at $\theta > \theta''_R$ where $\theta''_L =\hat\theta -V''$ and $\theta''_R =\hat\theta+ V''$ with $$V''= \frac{2}{n U}  = \sqrt{\frac{2}{n}} \cdot \sqrt{\frac{n^*}{n (- \log (\alpha^*/2))}}= \sqrt{\frac{2}{(\log 2) \cdot n}} \cdot c^{1/2}.
$$
Combining these two displays, we find that the conditions of Proposition~\ref{prop:dampening} hold if we set $\theta_L = \hat\theta -V$, $\theta_R = \hat\theta + V$, $V = \sqrt{\frac{2}{(\log 2) \cdot n}} \cdot \left( c^{1/2}+ c^{-1/2}  \right)$. Using $\sup_{\theta \in [\theta_L,\theta_R]} D(\hat\theta \| \theta) = V^2/2$ and $\max\{C_{\sup},p_{\sup} \}= 2$ in (\ref{eq:twice}) now gives 
(\ref{eq:hyperbound}).}



\commentout{

}
\end{document}